\documentclass[11pt]{amsart}
\usepackage{amsmath,amssymb,colonequals,verbatim}

\numberwithin{equation}{section}

\newtheorem{thm}[equation]{Theorem}

\newtheorem{lemma}[equation]{Lemma}
\newtheorem{cor}[equation]{Corollary}

\theoremstyle{definition}
\newtheorem{rmk}[equation]{Remark}

\newcommand{\F}{\mathbb{F}}
\newcommand{\bP}{\mathbb{P}}

\DeclareMathOperator{\ord}{ord}

\DeclareMathOperator{\Tr}{Tr}
\DeclareMathOperator{\PSL}{PSL}
\newcommand{\mybar}[1]{#1\llap{$\overline{\phantom{\rm#1}}$}}

\usepackage[colorlinks,pagebackref,pdftex,bookmarks=false]{hyperref}

\begin{document}

\title[Permutation polynomials over $\F_{q^3}$]
{A new construction of permutation polynomials over $\F_{q^3}$}

\author{Zhiguo Ding}
\address{
  School of Mathematics and Statistics, 
  Hunan Normal University, 
  Changsha 410081, China
}
\email{ding8191@qq.com}

\author{Xu Song}
\address{
  School of Mathematical Science, 
  Hebei Normal University, 
  Shijiazhuang 050024, China
}
\email{songxu@amss.ac.cn}

\author{Wei Xiong}
\address{
  School of Mathematics,
  Hunan University,
  Changsha 410082, China
}
\email{weixiong@amss.ac.cn}

\date{\today}

\thanks{
The first author thanks Michael Zieve for introducing him to this subject.
The second author was supported by Doctor Foundation of Hebei Normal University (No.\ L2024B02).
The third author was supported by National Natural Science Foundation of China (No.\ 12571010).
The authors thank Ye Tian for his help and encouragement, which in particular lead to this collaboration. 
}

\begin{abstract} 
We determine all permutation polynomials among several families of polynomials over $\F_{q^3}$ for arbitrary prime powers $q$. We obtain some new families of permutation polynomials over $\F_{q^3}$ with simple coefficients for infinitely many characteristics. As a specific consequence, our results resolve the generalization of conjectures of Zhang, Zheng, Wang, Peng, and Li in the even characteristic. Our proofs are conceptually short and involve no complicated computations, in contrast to the proofs of results on permutation polynomials which were published previously. Moreover, we develop a totally new systematic method in this paper for the study of permutation polynomials.
\end{abstract}

\maketitle


\section{Introduction}

A polynomial $f(X)\in\F_q[X]$ is called a \emph{permutation polynomial} if the function $\alpha\mapsto f(\alpha)$ induces a permutation of $\F_q$. Permutation polynomials have long been studied, both for their intrinsic interest and due to applications to various areas of computer science and engineering. 

The paper \cite{Z-Redei} introduced a systematic method to construct permutation polynomials over fields $\F_{q^2}$ of square order, by showing that the question of whether a polynomial of the form $X^r B(X^{q-1})$ permutes $\F_{q^2}$ may be reduced to the question of whether an associated rational function permutes $\bP^1(\F_q)\colonequals\F_q\cup\{\infty\}$. In over $100$ papers by various authors, the above method has been used to produce specific instances of permutation polynomials over $\F_{q^2}$ with few terms, for a typical example see \cite{DZ-quad}. But there are much few permutation polynomials over fields $\F_{q^3}$ of cubic order in the literature, all of which are constructed by using variants of the method introduced in \cite{Z-Redei}.

In this paper, we will introduce a new method to study permutation polynomials over $\F_{q^3}$. Most constructions of permutation polynomials in the literature use one step of reduction, which is some variant of the above described construction initially introduced in \cite{Z-Redei}, which is either multiplicative or additive. Our construction of permutation polynomials over $\F_{q^3}$ in this paper is different, in the sense that it consists of two steps of reduction. The first step reduces permutations on $\F_{q^3}$ to permutations on the set of roots of $X^{q^2}+X^q+X$, and the second step reduces permutations on the set of roots of $X^{q^2}+X^q+X$ to permutations on the set of roots of $X^{q+1}+X+1$. It is interesting to note that the first step is of additive flavor while the second step is of multiplicative flavor. It is surprising that the mixture of such two reductions of different flavors can lead to permutation polynomials over $\F_{q^3}$ with at most seven terms and very simple coefficients.

As an illustration of our new approach, we will construct several families of permutation polynomials over $\F_{q^3}$ with at most seven terms, among which one family has only three terms and three family have five terms, all of them have coefficients $1$. More importantly, we will construct two families of permutation polynomials over $\F_{q^3}$ for infinitely many characteristics. Our approach can apply to the study of other families of polynomials over $\F_{q^3}$, but only permutation polynomials over $\F_{q^3}$ with simple expressions are interesting, so we focus on such permutation polynomials. 

Our main results are as follows. For any nonzero integer $n$, we write $\ord_2(n)$ for the largest integer $s\ge 0$ such that $2^s$ divides $n$. We also write $\ord_2(0)=\infty$. In other words, $\ord_2(n)$ is the discrete $2$-adic valuation of $n$.

\begin{thm}\label{main1}
Assume $q\colonequals p^k$, $Q\colonequals p^{\ell}$, $R\colonequals p^m$, and $S\colonequals p^n$ for a prime $p$ and some integers $k,\ell,m,n$ with $k>0$ and $\ell,m,n\ge 0$. Suppose $c\in\F_q^*$ and write
\[
f(X)\colonequals X^{Q+1} \circ (X^q-X) + (X^{q^2}+X^q+X) \circ cX^{R+S}. 
\]
Then $f(X)$ permutes\/ $\F_{q^3}$ if and only if $\gcd\bigl(q-1,(Q+1)(R+S)\bigr)=1$, or equivalently, $p=2$ and $\ord_2(k)\le\min\bigl(\ord_2(\ell),\ord_2(m-n)\bigr)$.
\end{thm}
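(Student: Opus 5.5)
The plan is to exploit the additive decomposition built into the shape of $f$. Write $L(X)\colonequals X^q-X$ and $T(X)\colonequals X^{q^2}+X^q+X$. Then $L$ maps $\F_{q^3}$ onto $W\colonequals\ker T$, the trace-zero $\F_q$-plane, with fibers the cosets $x+\F_q$. Since $T(cx^{R+S})\in\F_q$, we have
\[
f(x)\in L(x)^{Q+1}+\F_q .
\]
Both $W$ and $\F_{q^3}/\F_q$ have $q^2$ elements, and every fiber $x+\F_q$ has $q$ elements. Hence $f$ permutes $\F_{q^3}$ if and only if two conditions hold:
\begin{enumerate}
\item[(i)] the map $u\mapsto u^{Q+1}+\F_q$ from $W$ to $\F_{q^3}/\F_q$ is injective;
\item[(ii)] for each $x$, the map $a\mapsto f(x+a)$ is injective on $\F_q$.
\end{enumerate}
This is the first, additive reduction.

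Condition (ii) is explicit. Put $t\colonequals T(x)\in\F_q$ and let $a\in\F_q$. Since $T(x^R)=t^R$, we get
\[
f(x+a)-f(x)=c\bigl(a^{R+S}+a^S t^R+a^R t^S\bigr)+(\text{an extra }2a^{R+S}\text{ term}).
\]
In characteristic $2$ the extra term vanishes, and the bracket equals $(a+t)^{R+S}-t^{R+S}$. Since $t$ ranges over all of $\F_q$, condition (ii) becomes injectivity of $z\mapsto z^{R+S}$ on $\F_q$, that is, $\gcd(q-1,R+S)=1$.

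In odd characteristic both conditions fail at once. First, $Q+1$ and $R+S$ are even, so $\gcd(q-1,(Q+1)(R+S))\ne1$. Second, condition (i) fails directly: for $0\ne u\in W$ we have $-u\in W$ and $(-u)^{Q+1}=u^{Q+1}$ with $-u\ne u$. So the theorem holds trivially for odd $p$, and the rest of the argument takes $p=2$.

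For condition (i) with $p=2$, I would use the second, multiplicative reduction. For $0\ne u\in W$, set $t\colonequals u^{q-1}$. Dividing $T(u)=0$ by $u$ shows that $t$ is a root of $X^{q+1}+X+1$. The $q+1$ roots of this polynomial parametrize the $\F_q^*$-lines in $W$, that is, $\bP^1(\F_q)$. Suppose $u^{Q+1}+v^{Q+1}\in\F_q$ with $u,v\in W$.
\begin{itemize}
\item Apply $L$ to get $u^{(Q+1)q}-u^{Q+1}=v^{(Q+1)q}-v^{Q+1}$.
\item Factor this as $u^{Q+1}\bigl(t_u^{Q+1}-1\bigr)=v^{Q+1}\bigl(t_v^{Q+1}-1\bigr)$, where $t_u\colonequals u^{q-1}$ and $t_v\colonequals v^{q-1}$.
\item Raise to the power $q-1$ and use the relation $t^{q+1}=t+1$ to eliminate $u$ and $v$. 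This should leave an equation purely in $t_u,t_v$ on the root set.
\end{itemize}
From there I would show that $u=v$ is forced exactly when $\gcd(Q+1,q-1)=1$. In that case $u\mapsto u^{Q+1}$ is injective on each line $\F_q^*u$, and $t\mapsto t^{Q+1}$ should act bijectively on the root set. Conversely, if $\gcd(Q+1,q-1)>1$, take $\zeta\in\F_q^*$ with $\zeta\ne1$ and $\zeta^{Q+1}=1$. Then $u$ and $\zeta u$ collide, so (i) fails.

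Combining (i) and (ii) gives the criterion $\gcd\bigl(q-1,(Q+1)(R+S)\bigr)=1$. The stated equivalence then follows from the standard facts below, valid for $p=2$, $q=2^k$, and applied with $Q=2^\ell$ and $R+S=2^n(2^{m-n}+1)$ (and symmetrically if $n>m$):
\begin{itemize}
\item $\gcd(2^k-1,2^j+1)=1$ if and only if $\ord_2(k)\le\ord_2(j)$;
\item the factor $2^n$ is harmless since $q-1$ is odd;
\item the case $m=n$ gives $R+S=2R$ and is covered by $\ord_2(0)=\infty$.
\end{itemize}

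The main obstacle is the sufficiency direction of (i). Nothing forces $u^{Q+1}$ to lie in $W$, so the induced map on the root set of $X^{q+1}+X+1$ has to be extracted carefully from the $L$-equation above. If the elimination becomes messy, I would fall back on counting: $W$ and $\F_{q^3}/\F_q$ both have $q^2$ elements, so it suffices to prove surjectivity instead of injectivity.
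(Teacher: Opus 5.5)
Your reduction to (i) and (ii) is correct and is the paper's Lemma~\ref{step1} in another guise: $X^q-X$ identifies $\F_{q^3}/\F_q$ with $W=\Gamma$, so (i) says that $g=(X^q-X)\circ X^{Q+1}$ permutes $\Gamma$. Your identity $f(x+a)-f(x)=c\bigl((a+t)^{R+S}-t^{R+S}\bigr)$ is Lemma~\ref{fiber}. The odd-$p$ case, the $u$ versus $\zeta u$ necessity argument, and the $2$-adic bookkeeping are also fine.

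However, the only nontrivial part of the theorem is left unproved. That part is showing (i) holds when $p=2$ and $\gcd(q-1,Q+1)=1$, i.e.\ the ``if'' half of Theorem~\ref{key}. You stop at ``this should leave an equation in $t_u,t_v$'' and ``from there I would show'', which is a genuine gap. The heuristic you attach to it is also wrong. The map induced on the root set $\Lambda$ of $X^{q+1}+X+1$ is not $t\mapsto t^{Q+1}$, and that map need not even send $\Lambda$ into itself. For example, with $q=Q=2$ and $t^3+t+1=0$ one has $t^3=t+1$, but $(t+1)^3+(t+1)+1=t^2+t\ne 0$. Carrying out your elimination with $t^q=t^{-1}+1$ gives $g(u)^{q-1}=h(t_u)$, where $h(X)=(X^Q+X+1)/(X^{Q+1}+1)$ has degree $Q+1$. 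Everything hinges on proving $h$ injective on $\Lambda$. Your counting fallback only trades this for an equally unproved surjectivity statement.

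The missing idea is the paper's conjugation trick.
\begin{enumerate}
\item Pass to $\widetilde h=(X^{-1}+1)\circ h=(X^{Q+1}+X^Q+X)/(X^Q+X+1)$. This is harmless, since $X^{-1}+1$ permutes $\Lambda$.
\item Let $\rho(X)=(\omega X+1)/(X+\omega)$ with $\omega^2+\omega+1=0$. Then $\widetilde h=\rho\circ X^{-(Q+1)}\circ\rho$ if $\ell$ is even, and $\widetilde h=\rho\circ X^{Q-1}\circ\rho$ if $\ell$ is odd.
\item By Corollary~\ref{deg1}, $\rho$ preserves $\bP^1(\F_{q^3})$ when $k$ is even and swaps $\bP^1(\F_{q^3})$ with $\mu_{q^3+1}$ when $k$ is odd. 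So $\widetilde h$ permutes $\bP^1(\F_{q^3})$ once the relevant gcd condition holds:
\begin{itemize}
\item $\gcd(q^3-1,Q+1)=1$ when $\ell$ and $k$ are both even;
\item $\gcd(q^3+1,Q+1)=1$ when $\ell$ is even and $k$ is odd;
\item $\gcd(q^3+1,Q-1)=1$ when $\ell$ is odd, which forces $k$ odd.
\end{itemize}
Each of these follows from $\ord_2(k)\le\ord_2(\ell)$ via Lemma~\ref{gcd}.
\item Since $h$ maps $\Lambda$ into $\Lambda$, it then permutes $\Lambda$.
\end{enumerate}
Before taking $(q-1)$-th powers you also need $g(u)\ne 0$ for $0\ne u\in W$, i.e.\ $t_u^{Q+1}\ne 1$. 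This follows from $\gcd(q^3-1,Q+1)=1$ together with $1\notin\Lambda$.

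As written, your proposal proves the ``only if'' direction, but reduces the ``if'' direction to exactly the claim that needs proof.
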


The permutation polynomials in Theorem~\ref{main1} have at most seven terms. By some special choices of $c$ and $Q,R,S$ in Theorem~\ref{main1}, we can obtain as follows one family of permutation trinomials and three families of permutation pentanomials over $\F_{q^3}$ with coefficients $1$. 

\begin{cor}\label{main2}
Assume $q\colonequals p^k$, $Q\colonequals p^{\ell}$ for a prime $p$ and some integers $k,\ell$ with $k>0$ and $\ell\ge 0$. Then the following are equivalent:
\begin{enumerate}
\item $\gcd(q-1,Q+1)=1$, or equivalently $p=2$ and $\ord_2(k)\le\ord_2(\ell)$;
\item $f_1(X)\colonequals X^{q^2Q+q}+X^{qQ+q^2}+X^{Q+1}$ permutes\/ $\F_{q^3}$;
\item $f_2(X)\colonequals X^{q^2Q+1}+X^{qQ+1}+X^{Q+q^2}+X^{Q+q}-X^{Q+1}$ permutes\/ $\F_{q^3}$;
\item $f_3(X)\colonequals X^{q^2Q+q}+X^{qQ+q}+X^{Q+q^2}-X^{Q+q}+X^{Q+1}$ permutes\/ $\F_{q^3}$;
\item $f_4(X)\colonequals X^{q^2Q+q^2}+X^{qQ+q^2}-X^{Q+q^2}+X^{Q+q}+X^{Q+1}$ permutes\/ $\F_{q^3}$.
\end{enumerate}
\end{cor}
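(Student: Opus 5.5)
The plan is to obtain each $f_i$ from a suitable specialization of Theorem~\ref{main1}, possibly composed on either side with the permutations $X^{q^j}$ of $\F_{q^3}$ and multiplied by $-1$. None of these operations affects whether the polynomial permutes $\F_{q^3}$. Since we only care about the induced functions on $\F_{q^3}$, all exponents may be read modulo $q^3-1$, so $X^{q^3 a}$ and $X^a$ agree there. Composing with $X^{q^j}$ on either side multiplies every exponent by $q^j$.

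First I expand the polynomial of Theorem~\ref{main1}. Since $(X^q-X)^{Q+1}=(X^{qQ}-X^Q)(X^q-X)$, we get
\[
X^{Q+1}\circ(X^q-X)=X^{qQ+q}-X^{qQ+1}-X^{Q+q}+X^{Q+1}.
\]
Since $c\in\F_q$, we also have
\[
(X^{q^2}+X^q+X)\circ cX^{R+S}=c\bigl(X^{q^2(R+S)}+X^{q(R+S)}+X^{R+S}\bigr).
\]
I will take $R,S$ to be $q^iQ$ and $q^j$ in some order, i.e.\ $p^{ik+\ell}$ and $p^{jk}$, which are admissible powers of $p$. Because $q^iQ+q^j\equiv Q+1\pmod{q-1}$, we have $\gcd\bigl(q-1,(Q+1)(R+S)\bigr)=1$ if and only if $\gcd(q-1,(Q+1)^2)=1$, which holds if and only if $\gcd(q-1,Q+1)=1$. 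So for every such choice, Theorem~\ref{main1} says the specialized $f$ permutes $\F_{q^3}$ exactly when condition~(1) holds. This covers both directions of each equivalence at once, including the non-permutation claim for odd $p$. The reformulation in~(1) in terms of $p=2$ and $\ord_2$ is the special case $m=n$ of the reformulation in Theorem~\ref{main1}, or can be checked directly.

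It remains to choose $c=\pm1$ and $(i,j)$ for each $f_i$.
\begin{itemize}
\item For $f_1$, take $c=-1$ and $R+S=Q+1$. The orbit terms $X^{Q+1},X^{qQ+q},X^{q^2Q+q^2}$ cancel $X^{Q+1}$ and $X^{qQ+q}$, leaving
\[
f=-X^{qQ+1}-X^{Q+q}-X^{q^2Q+q^2}.
\]
Multiplying exponents by $q$ gives $-f(X)^q=X^{q^2Q+q}+X^{qQ+q^2}+X^{Q+1}=f_1$.
\item For $f_3$, take $c=1$ and $R+S=qQ+1$. The term $X^{qQ+1}$ cancels, and $X^{q^3Q+q^2}=X^{Q+q^2}$ on $\F_{q^3}$, so $f=f_3$ directly.
\item For $f_2$ and $f_4$, I will run through the remaining choices $c=\pm1$ and $R+S\in\{Q+q,\,qQ+1,\,q^2Q+1,\dots\}$ in the same way. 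Each time I cancel one of the four terms of the first summand and then rotate exponents by a power of $q$ to match the target. For example, $c=1$, $R+S=Q+q$ gives $X^{qQ+q}-X^{qQ+1}+X^{Q+1}+X^{qQ+q^2}+X^{q^2Q+1}$, which I expect to match $f_2$ or $f_4$ after a Frobenius twist.
\end{itemize}

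The only real obstacle is this bookkeeping: finding the right $(c,i,j)$ and the right Frobenius twist for $f_2$ and $f_4$. The signs matter when $p$ is odd, since the statement asserts equivalence for all $p$. Everything else is an immediate consequence of Theorem~\ref{main1}.
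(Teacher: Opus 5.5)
Your treatment of $f_1$ and $f_3$ is the same as the paper's, and the gcd reduction via $q^iQ+q^j\equiv Q+1\pmod{q-1}$ is correct. Your sample choice $c=1$, $R+S=Q+q$ actually settles $f_4$. The resulting polynomial $g=X^{qQ+q}-X^{qQ+1}+X^{Q+1}+X^{qQ+q^2}+X^{q^2Q+1}$ satisfies $f_4(X)^q\equiv g\pmod{X^{q^3}-X}$, and it is exactly the polynomial $\widetilde f_4$ used in the paper.

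The genuine gap is $f_2$: no choice in your search space produces it, for any $c$ or sign.

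\begin{itemize}
\item Write each exponent as $q^aQ+q^b$ and sort monomials by the class $a-b\bmod 3$. Multiplying all exponents by $q$ preserves this class, and it leaves coefficients $\pm1$ unchanged.
\item With $Q$ fixed, the first summand puts $X^{qQ+q}+X^{Q+1}$ in class $0$, the single monomial $-X^{qQ+1}$ in class $1$, and the single monomial $-X^{Q+q}$ in class $2$.
\item The trace part $c(X^{e}+X^{qe}+X^{q^2e})$ with $e=q^iQ+q^j$ only touches class $i-j$. So every specialization has exactly one monomial in class $1$ or exactly one in class $2$.
\item By contrast, $\pm f_2(X)^{q^t}$ has two monomials in each of classes $1$ and $2$. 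For $f_2$ itself these are $X^{qQ+1},X^{Q+q^2}$ and $X^{q^2Q+1},X^{Q+q}$.
\item When $k\nmid\ell$ (e.g.\ $q=8$, $Q=2$), all these exponents are pairwise distinct modulo $q^3-1$, so no match is possible. The bookkeeping you defer cannot succeed.
\end{itemize}

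The missing idea is to also vary the parameter $Q$ of Theorem~\ref{main1}. Apply it with $qQ=p^{k+\ell}$ in place of $Q$; this is admissible, and $\gcd(q-1,qQ+1)=\gcd(q-1,Q+1)$, so the condition is unchanged. This substitution shifts every class by one. Now run your $f_3$-construction for the new parameter, i.e.\ $c=1$, $R=q^2Q$, $S=1$. It gives
\[
\widetilde f_2=X^{q^3Q+q}+X^{q^2Q+q}+X^{qQ+q^2}-X^{qQ+q}+X^{qQ+1},
\]
and one checks $f_2(X)^q\equiv\widetilde f_2\pmod{X^{q^3}-X}$. This is exactly how the paper handles $f_2$.
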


\begin{rmk}
The permutation pentanomials $f_2(X)$ and $f_3(X)$ in Corollary~\ref{main2} solve two conjectures in \cite{ZZWPL}, and the others are new in the literature as far as our knowledge. In particular, the permutation trinomial $f_1(X)$ looks like more interesting. It is very surprising to obtain a classification of permutation trinomials over $\F_{q^3}$ along a long way of construction. 
\end{rmk}

\begin{thm}\label{main3}
Assume $q\colonequals p^k$, $Q\colonequals p^{\ell}$, and $R\colonequals p^m$ for a prime $p$ and some integers $k,\ell,m$ with $k>0$ and $\ell,m\ge 0$. Suppose $c\in\F_{q^3}$ and write
\[
f(X)\colonequals X^{Q+1} \circ (X^q-X) + (X^{q^2}+X^q+X) \circ cX^R.
\]
Then $f(X)$ permutes\/ $\F_{q^3}$ if and only if $\gcd(q-1,Q+1)=1$ and\/ $\Tr_{\F_{q^3}/\F_q}(c)\ne 0$, or equivalently, $p=2$, $\ord_2(k)\le\ord_2(\ell)$, and $c^{q^2}+c^q+c\ne 0$.
\end{thm}

The permutation polynomials in Theorem~\ref{main3} have at most seven terms. By simply taking $c=1$ in Theorem~\ref{main3}, we can obtain as follows a family of permutation polynomials over $\F_{q^3}$ with coefficients $1$. 

\begin{cor}\label{main4}
Assume $q\colonequals p^k$, $Q\colonequals p^{\ell}$, and $R\colonequals p^m$ for a prime $p$ and some integers $k,\ell,m$ with $k>0$ and $\ell,m\ge 0$. Write
\[
f_5(X)\colonequals X^{qQ+q}-X^{qQ+1}-X^{Q+q}+X^{Q+1}+X^{q^2R}+X^{qR}+X^{R}.
\]
Then $f_5(X)$ permutes\/ $\F_{q^3}$ if and only if $\gcd(q-1,Q+1)=1$, or equivalently $p=2$ and $\ord_2(k)\le\ord_2(\ell)$.
\end{cor}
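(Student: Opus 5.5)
Corollary~\ref{main4} follows from Theorem~\ref{main3} with $c=1$. The plan has two steps: expand the composition defining $f(X)$ to see that it equals $f_5(X)$, and check that the trace condition holds automatically for $c=1$.

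First we expand $f(X)=X^{Q+1}\circ(X^q-X)+(X^{q^2}+X^q+X)\circ X^R$. Since $Q$ is a power of the characteristic $p$, Frobenius gives
\[
(X^q-X)^{Q+1}=(X^{qQ}-X^Q)(X^q-X)=X^{qQ+q}-X^{qQ+1}-X^{Q+q}+X^{Q+1}.
\]
Likewise $(X^{q^2}+X^q+X)\circ X^R=X^{q^2R}+X^{qR}+X^R$. Adding the two shows $f(X)=f_5(X)$ exactly as polynomials.

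Second, Theorem~\ref{main3} says $f_5$ permutes $\F_{q^3}$ if and only if $\gcd(q-1,Q+1)=1$ and $\Tr_{\F_{q^3}/\F_q}(1)\ne0$. Here $\Tr_{\F_{q^3}/\F_q}(1)=3$, so the trace condition fails only when $p=3$. In that case $q-1$ and $Q+1$ are both even, so $\gcd(q-1,Q+1)\ge 2$ and the first condition fails too. Hence, whenever $\gcd(q-1,Q+1)=1$, we automatically have $p\ne3$ and $\Tr(1)\ne0$. The criterion therefore reduces to $\gcd(q-1,Q+1)=1$.

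The equivalence of $\gcd(q-1,Q+1)=1$ with ``$p=2$ and $\ord_2(k)\le\ord_2(\ell)$'' is the same restatement already given in Theorem~\ref{main3}, and it is taken from there.

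There is no substantial obstacle. The only point needing care is the characteristic-$3$ case, where $\Tr(1)=0$; the parity observation above disposes of it.
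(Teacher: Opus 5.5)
Your proposal is correct and is exactly the paper's argument: the paper states that Corollary~\ref{main4} follows directly from Theorem~\ref{main3} by taking $c=1$. Your explicit expansion of $X^{Q+1}\circ(X^q-X)$, together with your observation that $\Tr_{\F_{q^3}/\F_q}(1)=3$ can vanish only when $p=3$ (a case already excluded by $\gcd(q-1,Q+1)=1$), fills in the details the paper leaves implicit.
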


The key part of the proofs of the above results consists of the proof of the following result, which is of independent interest.

\begin{thm}\label{key}
Suppose $q=p^k$ and $Q=p^{\ell}$ for a prime $p$ and some integers $k>0$ and $\ell\ge 0$. Then $(X^q-X)\circ X^{Q+1}$ permutes the set of roots in\/ $\F_{q^3}$ of $X^{q^2}+X^q+X$ if and only if $\gcd(q-1,Q+1)=1$, or equivalently $p=2$ and $\ord_2(k)\le\ord_2(\ell)$.   
\end{thm}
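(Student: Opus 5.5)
We work on $T\colonequals\{x\in\F_{q^3}: x^{q^2}+x^q+x=0\}$, which has $q^2$ elements. Put $g(X)\colonequals(X^q-X)\circ X^{Q+1}=X^{(Q+1)q}-X^{Q+1}$. Every value of $X^q-X$ has trace zero, so $g$ maps $T$ into $T$. The plan is to pass from $T$ to the root set $U\subset\F_{q^3}$ of $X^{q+1}+X+1$, and then run a multiplicative reduction in the style of \cite{Z-Redei}.

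For $0\ne x\in T$ put $u=x^{q-1}$. Dividing $x^{q^2}+x^q+x=0$ by $x$ gives $u^{q+1}+u+1=0$, so $u\in U$. The fibres of $x\mapsto x^{q-1}$ on $T\setminus\{0\}$ are exactly the cosets $x\F_q^*$. Hence this map is a surjection $T\setminus\{0\}\to U$, and counting gives $|U|=(q^2-1)/(q-1)=q+1$. Moreover $g(x)=x^{Q+1}\bigl(u^{Q+1}-1\bigr)$, so $g(\lambda x)=\lambda^{Q+1}g(x)$ for $\lambda\in\F_q$.

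\emph{Necessity.} Suppose $d=\gcd(q-1,Q+1)>1$. Then $\lambda\mapsto\lambda^{Q+1}$ is not injective on $\F_q^*$. Either $g(x)=0$ for some $x\ne 0$, or $g(\lambda x)=g(x)$ for some $\lambda\ne1$; in both cases $g$ is not injective on $T$. The numerical equivalence is elementary. If $p$ is odd, both $q-1$ and $Q+1$ are even. If $p=2$, the standard fact is that $\gcd(2^k-1,2^\ell+1)=1$ if and only if $\ord_2(k)\le\ord_2(\ell)$.

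\emph{Sufficiency.} Assume $p=2$ and $\gcd(q-1,Q+1)=1$. By the usual multiplicative criterion, $g$ permutes $T$ if and only if two things hold: $u^{Q+1}\ne1$ for all $u\in U$ (so $g$ has no nonzero zeros on $T$), and the induced map $u\mapsto g(x)^{q-1}$ permutes $U$. To compute the induced map, use $u^q=(1+u)/u$ and $(1+u)^{Q+1}=1+u+u^Q+u^{Q+1}$ in characteristic $2$. This gives
\[
g(x)^{q-1}=u^{Q+1}\,\frac{(u^{Q+1}+1)^q}{u^{Q+1}+1}=\frac{1+u+u^Q}{1+u^{Q+1}}\equalscolon\varphi(u).
\]
For the nonvanishing condition: if $u^{Q+1}=1$, then $u$ also has order dividing $q^2+q+1$, which is odd. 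Combined with $\gcd(q-1,Q+1)=1$ and the relation $u^{q+1}=u+1$, I expect this to force $u\in\F_q$. But $U\cap\F_q=\emptyset$, since for $u\in\F_q$ the equation becomes $u^2+u+1=0$, and this has no root in $\F_q$ when $\gcd(q-1,3)=1$, which is implied.

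The main obstacle is proving that $\varphi$ is injective on $U$. Two facts help. Since $X^{q+1}+X+1$ has coefficients in $\F_2$, the Frobenius $u\mapsto u^Q$ permutes $U$. Also, all three maps $u\mapsto u^q$, $u\mapsto 1/u$, $u\mapsto 1+u$ are bijections of suitable related sets: $u\mapsto u^q=(1+u)/u$ is an order-$3$ Möbius transformation preserving $U$. My first attempt will be to write $\varphi$ as a composition of Frobenius powers and such Möbius maps, using $u^{q+1}=1+u$ repeatedly. If that fails, I will assume $\varphi(u)=\varphi(w)$ with $u,w\in U$ and clear denominators to get $(u+w)\bigl(\cdots\bigr)=0$. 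I would then show the cofactor cannot vanish, using $u^{q^2+q+1}=w^{q^2+q+1}=1$ and the gcd hypothesis, in the same way as for the nonvanishing condition.
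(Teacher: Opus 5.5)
Your reduction to $U$, the necessity argument, and the formula $\varphi(u)=(1+u+u^Q)/(1+u^{Q+1})$ are correct and match the paper's $h(X)$ from Lemma~\ref{step2}. However, the sufficiency direction is not proved. Injectivity of $\varphi$ on $U$ is essentially the whole content of the theorem, and you only offer two tentative strategies.

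Your first strategy cannot work at the level of rational functions. For $\ell$ even, the numerator and denominator of $\varphi$ are coprime, so $\deg\varphi=Q+1$. For $\ell$ odd, they share exactly the factor $X^2+X+1$, so $\deg\varphi=Q-1$. For $\ell\ge 2$ neither degree is a power of $2$, whereas any composition of Frobenius powers and Möbius maps has $2$-power degree. So some power map $X^{Q\pm1}$ has to enter.

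The missing idea is the paper's conjugation. Compose with $X\mapsto X^{-1}+1$, which preserves $U$ since $u^q=u^{-1}+1$ there. This gives $\widetilde h(X)=(X^{Q+1}+X^Q+X)/(X^Q+X+1)$. With $\rho(X)=(\omega X+1)/(X+\omega)$ and $\omega^2+\omega+1=0$, one has $\widetilde h=\rho\circ X^{-(Q+1)}\circ\rho$ for $\ell$ even and $\widetilde h=\rho\circ X^{Q-1}\circ\rho$ for $\ell$ odd. One then shows that $\widetilde h$ is injective on the larger set $\bP^1(\F_{q^3})\supseteq U$:
\begin{itemize}
\item $\rho$ preserves $\bP^1(\F_{q^3})$ when $k$ is even, and swaps it with $\mu_{q^3+1}$ when $k$ is odd;
\item the middle power map is bijective on the relevant set because $\gcd(Q+1,q^3-1)=1$, or $\gcd(Q+1,q^3+1)=1$ (for $k$ odd, $\ell$ even), or $\gcd(Q-1,q^3+1)=1$ (for $\ell$ odd).
\end{itemize}
Your second strategy would have to rediscover exactly this structure, because $\varphi(u)=\varphi(w)$ is equivalent to $\rho(u)^{Q+1}=\rho(w)^{Q+1}$ (resp.\ $\rho(u)^{Q-1}=\rho(w)^{Q-1}$). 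The relation $u^{q^2+q+1}=1$ alone does not see the group $\mu_{q^3+1}$, which is what governs the case $k$ odd.

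Your nonvanishing step also contains a false claim. The hypothesis $\gcd(q-1,Q+1)=1$ does not imply $\gcd(q-1,3)=1$ when $\ell$ is even; take $q=Q=4$. In that case $q\equiv 1\pmod 3$, and $\omega\in\F_q$ satisfies $\omega^{q+1}+\omega+1=\omega^2+\omega+1=0$, so $U\cap\F_q\neq\emptyset$. Reducing to $u\in\F_q$ therefore does not help, and ``I expect this to force $u\in\F_q$'' is not an argument in any case. The correct short argument is this. From $\ord_2(3k)=\ord_2(k)\le\ord_2(\ell)$ you get $\gcd(Q+1,q^3-1)=1$, so $u^{Q+1}=1$ with $u\in\F_{q^3}$ forces $u=1$. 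And $1\notin U$ in characteristic $2$.
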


Variants of Theorem~\ref{key} have appeared in several papers in different forms, most notably in the papers \cite{DZ-quad} and \cite{Gologlu-fractional} which determine all permutation rational functions in certain families of rational functions over $\F_q$.

The following is a variant of Theorem~\ref{key} with $X^{Q+1}$ replaced by $X^3$.

\begin{thm}\label{key-deg3}
The polynomial $(X^q-X)\circ X^3$ permutes the set of roots in\/ $\F_{q^3}$ of $X^{q^2}+X^q+X$ if and only if $q\equiv 2\pmod 3$.   
\end{thm}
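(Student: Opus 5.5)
Write $T$ for the set of roots of $X^{q^2}+X^q+X$ in $\F_{q^3}$ and $g(X)\colonequals X^{3q}-X^3=(X^q-X)\circ X^3$. Since $\Tr_{\F_{q^3}/\F_q}$ commutes with $X^3$ up to applying it to $x^3$, we have $\Tr(g(x))=\Tr(x^3)^q-\Tr(x^3)=0$. So $g$ maps $T$ into $T$, and because $T$ is finite it suffices to decide when $g$ is injective on $T$. The plan follows the second (multiplicative) reduction described in the introduction. For $0\ne x\in T$, put $u\colonequals x^{q-1}$. Dividing $x^{q^2}+x^q+x=0$ by $x$ gives $u^{q+1}+u+1=0$. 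Thus $x\mapsto x^{q-1}$ maps $T\setminus\{0\}$ into the set $U$ of roots of $X^{q+1}+X+1$ in $\F_{q^3}$, and its fibres are cosets of $\F_q^*$. Moreover $g(x)=x^3(u^3-1)$.

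\emph{Necessity.} I will exhibit a nonzero $x\in T$ with $g(x)=0$, i.e.\ with $x^{3(q-1)}=1$, whenever $q\not\equiv 2\pmod 3$.
\begin{itemize}
\item If $p=3$, every $x\in\F_q^*$ satisfies $\Tr(x)=3x=0$ and $g(x)=0$.
\item If $q\equiv 1\pmod 3$, let $\omega\in\F_q$ be a primitive cube root of unity. Then $\omega^{q+1}+\omega+1=\omega^2+\omega+1=0$, and $\omega^{q^2+q+1}=\omega^3=1$. Hence $\omega=x^{q-1}$ for some $x\in\F_{q^3}^*$, and this $x$ lies in $T$ by reversing the computation above. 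It satisfies $x^{3(q-1)}=\omega^3=1$, so $g(x)=0=g(0)$.
\end{itemize}
In both cases $g$ is not injective on $T$.

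\emph{Sufficiency.} Now assume $q\equiv 2\pmod 3$, so $p\ne 3$, cubing is a bijection of $\F_q^*$, and $\F_{q^3}$ contains no primitive cube root of unity (such a root would have to lie in $\F_{q^3}\cap\F_{q^2}=\F_q$). First, if $u\in U$ then $u\neq 1$ (that would force $3=0$) and $u^2+u+1\neq 0$. So $u^3\ne1$, and $g$ vanishes on $T$ only at $0$. Next, $g(\lambda x)=\lambda^3g(x)$ for $\lambda\in\F_q^*$, and
\[
g(x)^{q-1}=u^3\,\frac{u^{3q}-1}{u^3-1}\equalscolon h(u).
\]
The key simplification is to use $u^q=-(u+1)/u$, valid on $U$. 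This gives
\[
h(u)=\frac{-(u+1)^3-u^3}{u^3-1}=-\frac{(2u+1)(u^2+u+1)}{(u-1)(u^2+u+1)}=-\frac{2u+1}{u-1},
\]
which is a degree-one rational function (its determinant is $-3\neq 0$), hence injective on $U$.

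Now suppose $g(x)=g(y)$ with $x,y\in T\setminus\{0\}$. Then $h(x^{q-1})=h(y^{q-1})$, so $x^{q-1}=y^{q-1}$ and $y=\lambda x$ with $\lambda\in\F_q^*$. Then $\lambda^3 g(x)=g(x)$ with $g(x)\neq0$, so $\lambda^3=1$, and therefore $\lambda=1$. Together with $g^{-1}(0)\cap T=\{0\}$, this shows $g$ is injective on $T$, hence a permutation of $T$.

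The main point to get right is the Möbius simplification of $h$, together with checking that the cancelled factor $u^2+u+1$ and the denominator $u-1$ never vanish on $U$. This is exactly where the hypothesis $q\equiv 2\pmod 3$ enters.
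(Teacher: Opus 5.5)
Your proof is correct and is essentially the paper's argument: you use the reduction $x\mapsto x^{q-1}$ from $\Gamma^*$ to $\Lambda$ (Lemma~\ref{step2}, which you unwind into a direct injectivity check via $g(\lambda x)=\lambda^3 g(x)$), simplify $h$ to the degree-one map $-(2X+1)/(X-1)$, and reduce everything to whether $X^3-1$ has a root in $\Lambda$. The differences are cosmetic: for necessity you exhibit explicit nonzero zeros of $g$ in $\Gamma$, and you exclude roots of $X^3-1$ in $\Lambda$ because $\F_{q^3}$ has no primitive cube roots of unity, whereas the paper derives $\lambda=-2$ and reaches a contradiction.
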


It is interesting to note that Theorem~\ref{key-deg3} gives rise to permutation on roots of $X^{q^2}+X^q+X$ for infinitely many characteristics. Consequently, Theorem~\ref{key-deg3} can be used to obtain the following family of permutation polynomials over $\F_{q^3}$ for infinitely many characteristics.

\begin{thm}\label{main-deg3}
Assume $q\colonequals p^k$ and $Q\colonequals p^{\ell}$ for a prime $p$ and some integers $k,\ell$ with $k>0$ and $\ell\ge 0$. Suppose $c\in\F_{q^3}$ and write
\[
f(X)\colonequals X^3 \circ (X^q-X) + (X^{q^2}+X^q+X) \circ cX^Q.
\]
Then $f(X)$ permutes\/ $\F_{q^3}$ if and only if $q\equiv 2\pmod 3$ and $c^{q^2}+c^q+c\ne 0$.
\end{thm}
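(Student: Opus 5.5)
We prove Theorem~\ref{main-deg3} by the same two-step reduction used for Theorems~\ref{main1} and~\ref{main3}, with Theorem~\ref{key-deg3} playing the role of Theorem~\ref{key}. Write $T(X)\colonequals X^{q^2}+X^q+X$, $L(X)\colonequals X^q-X$ and $V\colonequals\{\alpha\in\F_{q^3}: T(\alpha)=0\}$. Both $T$ and $L$ are $\F_q$-linear maps on $\F_{q^3}$. The kernel of $T$ is $V$, a $2$-dimensional $\F_q$-subspace. Since $T\circ L=X^{q^3}-X$ vanishes on $\F_{q^3}$, the image of $L$ lies in $V$; as $\ker L=\F_q$, we get $L(\F_{q^3})=V$. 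Also $T(\F_{q^3})=\F_q$. With this notation
\[
f = X^3\circ L + T\circ cX^Q .
\]

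\textbf{Decomposition.} Fix $\beta\in\F_{q^3}$ and apply $L$ to $f(\beta)$. Since $L\circ T=X^{q^3}-X=0$ on $\F_{q^3}$, the second term disappears, so
\[
L(f(\beta))=(L\circ X^3)(L(\beta)).
\]
Hence $L\circ f=g\circ L$, where $g\colonequals (X^q-X)\circ X^3$ maps $V$ into $V$. Now look at a fiber of $L$. For $t\in\F_q$,
\[
f(\beta+t)=f(\beta)+T\bigl(c(\beta+t)^Q\bigr)-T(c\beta^Q)=f(\beta)+T(c)\,t^Q,
\]
because $t^Q\in\F_q$ and $T$ is $\F_q$-linear. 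So $f$ sends the fiber $\beta+\F_q$ of $L$ to the fiber $f(\beta)+\F_q$ of $L$, and on these fibers it is the map $t\mapsto T(c)t^Q$ followed by a translation.

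\textbf{Criterion.} By the standard lemma for maps compatible with a surjection, $f$ permutes $\F_{q^3}$ if and only if two things hold: $g$ permutes $L(\F_{q^3})=V$, and $f$ is injective on each fiber $\beta+\F_q$. The second condition holds exactly when $t\mapsto T(c)t^Q$ is injective on $\F_q$, that is, when $T(c)=c^{q^2}+c^q+c\ne0$, since $t\mapsto t^Q$ is a bijection of $\F_q$. By Theorem~\ref{key-deg3}, $g$ permutes $V$ if and only if $q\equiv 2\pmod 3$. Combining the two conditions proves the theorem.

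\textbf{Where care is needed.} There is no serious obstacle here, since all the real work is in Theorem~\ref{key-deg3}. The step that needs attention is the necessity direction. If $g$ fails to permute $V$, then $L\circ f=g\circ L$ is not surjective onto $V$, so $f$ is not surjective. If $T(c)=0$, then $f$ is constant on each fiber $\beta+\F_q$, so $f$ is not injective. We should also check that $L$ maps onto all of $V$ and not merely into it; this is the dimension count given above, $\dim_{\F_q}L(\F_{q^3})=3-1=2=\dim_{\F_q}V$.
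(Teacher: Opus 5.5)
Your proof is correct and essentially the paper's own argument. The paper applies Lemma~\ref{step1}, whose proof is exactly your intertwining $(X^q-X)\circ f=g\circ(X^q-X)$ plus the fiber-injectivity criterion, together with Theorem~\ref{key-deg3}, and it settles the fiber condition by additivity: this reduces to injectivity of $(c^{q^2}+c^q+c)X^Q$ on $\F_q$, which is precisely your computation $f(\beta+t)=f(\beta)+T(c)\,t^Q$.
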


The permutation polynomials in Theorem~\ref{main-deg3} have at most seven terms. By simply taking $c\in\F_q^*$ in Theorem~\ref{main-deg3}, we can obtain the following family of permutation polynomials over $\F_{q^3}$ with coefficients in $\F_q$. 

\begin{cor}\label{main2-deg3}
Assume $q\colonequals p^k$ and $Q\colonequals p^{\ell}$ for a prime $p$ and some integers $k,\ell$ with $k>0$ and $\ell\ge 0$. For any $c\in\F_q^*$, write
\[
f_6(X)\colonequals X^{3q}-3X^{2q+1}+3X^{q+2}-X^3+cX^{q^2Q}+cX^{qQ}+cX^{Q}.
\]
Then $f_6(X)$ permutes\/ $\F_{q^3}$ if and only if $q\equiv 2\pmod3$.
\end{cor}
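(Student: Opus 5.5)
The plan is to recognize $f_6(X)$ as a special case of the polynomial $f(X)$ in Theorem~\ref{main-deg3}, and then check that the trace condition there is automatic under the congruence $q\equiv 2\pmod 3$.

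\emph{Matching $f_6$ with $f$.} By the binomial theorem,
\[
X^3\circ(X^q-X)=(X^q-X)^3=X^{3q}-3X^{2q+1}+3X^{q+2}-X^3 .
\]
Next we expand the linearized part:
\[
(X^{q^2}+X^q+X)\circ cX^Q=c^{q^2}X^{q^2Q}+c^qX^{qQ}+cX^Q .
\]
Since $c\in\F_q^*$, we have $c^q=c^{q^2}=c$, so this equals $cX^{q^2Q}+cX^{qQ}+cX^Q$. Hence $f_6(X)$ coincides with the polynomial $f(X)$ of Theorem~\ref{main-deg3} for this choice of $c$.

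\emph{Applying Theorem~\ref{main-deg3}.} That theorem says $f_6$ permutes $\F_{q^3}$ if and only if $q\equiv 2\pmod 3$ and $c^{q^2}+c^q+c\neq 0$. For $c\in\F_q^*$, the trace is $c^{q^2}+c^q+c=3c$, which is nonzero exactly when $p\neq 3$. It remains to show that the second condition follows from the first.

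\emph{Removing the trace condition.} If $q\equiv 2\pmod 3$, then $3\nmid q$, so $p\ne 3$ and hence $3c\ne 0$. Thus the two conditions together are equivalent to $q\equiv 2\pmod 3$ alone, which proves the corollary.

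There is no real obstacle. The only points to get right are the binomial expansion and the use of $c\in\F_q$ to make the three coefficients equal. In characteristic $3$ the cubic part becomes $X^{3q}-X^3$, but that case is excluded by the congruence anyway, consistent with the statement.
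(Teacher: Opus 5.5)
Your proposal is correct and is exactly the paper's argument: the paper simply states that the corollary follows from Theorem~\ref{main-deg3} by taking $c\in\F_q^*$. You fill in the details the paper leaves implicit, namely the binomial expansion of $(X^q-X)^3$, the identity $c^q=c$, and the fact that $\Tr_{\F_{q^3}/\F_q}(c)=3c\ne 0$ holds automatically because $q\equiv 2\pmod 3$ forces $p\ne 3$.
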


\begin{rmk}
By taking $c=1$ and $Q=1$, we get a bivariate degree-$3$ polynomial $f(X,Y)\colonequals (Y-X)^3+(Y^2+Y+X)$ over the prime field $\F_p$ with $p\equiv 2\pmod 3$, satisfying $f(X,X^q)$ is a permutation polynomial over $\F_{q^3}$ for any $q=p^k$ with an odd integer $k>0$. This might be viewed as a generalization of classical exceptional polynomials over $\F_p$, which permute $\F_{p^m}$ for infinitely many integers $m>0$. See \cite{DXZ,GRZ,GZ,Z-exc} for the study of exceptional polynomials and its connection to permutation polynomials. 
\end{rmk}

This paper is organized as follows. In the next section we recall some background material and prove some simple results. In Section~\ref{proof} we give proofs of our main results listed above.


\section{Background results}

In this section we recall some known results which we will use in our proofs. Throughout this paper, we will use the following notation:

\begin{itemize}
\item $p$ is a prime;
\item $q=p^k$ for some integer $k>0$;
\item $Q=p^{\ell}$ for some integer $\ell\ge 0$;
\item $\mybar\F_q$ is the algebraic closure of $\F_q$;
\item for any subfield $K$ of $\mybar\F_q$, we write $\bP^1(K)\colonequals K\cup\{\infty\}$, which is the set of $\F_q$-points of the projective line $\bP^1$ over $K$;
\item for any integer $m>0$, let $\mu_m$ be the set of $m$-th roots of unity in $\mybar\F_q$;
\item $\Gamma$ is the set of roots of $X^{q^2}+X^q+X$ in $\F_{q^3}$;
\item $\Gamma^*$ is the set of nonzero roots of $X^{q^2}+X^q+X$ in $\F_{q^3}$;
\item $\Lambda$ is the set of roots of $X^{q+1}+X+1$ in $\F_{q^3}$;
\item for any nonzero integer $n$, we write $\ord_2(n)$ for the largest integer $s\ge 0$ such that $2^s$ divides $n$, and we also write $\ord_2(0)=\infty$;
\item for any $g(X)\in\mybar\F_q(X)$, we write $g^{(q)}(X)$ for the rational function obtained from $g(X)$ by raising its coefficients to their $q$-th powers.
\end{itemize}

The following result is well-known and easy, cf.\ e.g.\ \cite[Lemma~5.2]{DZ-quad}:

\begin{lemma}\label{gcd}
If $k$ and $\ell$ are integers with $k>0\le\ell$ then
\begin{itemize}
\item $\gcd(2^k+1,2^\ell+1)=1$ if and only if $\ord_2(k)\ne\ord_2(\ell)$;
\item $\gcd(2^k-1,2^\ell+1)=1$ if and only if $\ord_2(k)\le\ord_2(\ell)$.
\end{itemize}
\end{lemma}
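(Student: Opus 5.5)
The statement to prove is Lemma~\ref{gcd}. I would reduce both items to the standard identity $\gcd(2^a-1,2^b-1)=2^{\gcd(a,b)}-1$ for positive integers $a,b$, together with $2^\ell+1=(2^{2\ell}-1)/(2^\ell-1)$. The case $\ell=0$ is handled separately: then $2^\ell+1=2$ and $\ord_2(\ell)=\infty$. Since $2^k\pm1$ is odd, both gcds equal $1$, and both conditions hold ($\ord_2(k)\ne\infty$ and $\ord_2(k)\le\infty$). So assume $\ell>0$, and write $d=\gcd(k,\ell)$.

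For the second item, let $g=\gcd(2^k-1,2^\ell+1)$. Since $g$ divides both $2^k-1$ and $2^{2\ell}-1$, it divides $2^{e}-1$ with $e=\gcd(k,2\ell)$. If $\ord_2(k)\le\ord_2(\ell)$, then $e=d$, so $g$ divides $2^d-1$. Since $d\mid\ell$, it also divides $2^\ell-1$. Then $g$ divides $(2^\ell+1)-(2^\ell-1)=2$, and as $g$ is odd, $g=1$. Conversely, if $\ord_2(k)>\ord_2(\ell)$, then $e=2d$ and $\ell/d$ is odd. Hence $2^d+1$ divides $2^\ell+1$, because $x+1\mid x^{m}+1$ for odd $m$, with $x=2^d$. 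Also $2^{2d}-1$ divides $2^k-1$ since $2d\mid k$. So $2^d+1\ge3$ divides $g$, and $g\ne1$.

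For the first item, let $h=\gcd(2^k+1,2^\ell+1)$. Both numbers divide $2^{2k}-1$ and $2^{2\ell}-1$, so $h$ divides $2^{2d}-1=(2^d-1)(2^d+1)$. If $\ord_2(k)=\ord_2(\ell)$, then $k/d$ and $\ell/d$ are both odd, so $2^d+1$ divides both numbers and $h\ge3$. If the orders differ, say $\ord_2(k)<\ord_2(\ell)$ by symmetry, then $\ord_2(k)\le\ord_2(\ell)$. By the second item, $\gcd(2^k-1,2^\ell+1)=1$. Now $h$ divides $2^{2k}-1=(2^k-1)(2^k+1)$ and is coprime to $2^k-1$, and I would like to conclude from this that $h=1$. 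A cleaner route is to write $h\mid 2^{2d}-1$ and note that $d\mid k$ forces $2^d-1\mid 2^k-1$. Also $k/d$ is even here (because $\ord_2(k)<\ord_2(\ell)$ would make $\ell/d$ even... rather, the smaller order gives the odd quotient). So I would argue by cases on which of $k/d$, $\ell/d$ is even: if $m=k/d$ is even, then $2^{2d}-1\mid 2^k-1$, and so $h\mid\gcd(2^k-1,2^k+1)=1$. The symmetric case is identical.

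The main obstacle is only bookkeeping: getting the parity of the quotients $k/d$ and $\ell/d$ right in each case. Everything else follows from the standard gcd identity, which I take as known.
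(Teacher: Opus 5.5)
Your proof is correct and self-contained, given the standard identity $\gcd(2^a-1,2^b-1)=2^{\gcd(a,b)}-1$. The paper does not prove this lemma at all; it quotes it as well known and cites \cite[Lemma~5.2]{DZ-quad}. So there is no competing argument to compare with, and your proof supplies what the paper outsources.

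The individual steps check out:
\begin{itemize}
\item The case $\ell=0$ is handled correctly.
\item In the second item, $\ord_2(k)\le\ord_2(\ell)$ gives $\gcd(k,2\ell)=\gcd(k,\ell)=d$. Hence $g\mid 2^d-1\mid 2^\ell-1$, so $g\mid 2$.
\item In the opposite case, $\ell/d$ is odd and $2d\mid k$, so $2^d+1$ divides both $2^k-1$ and $2^\ell+1$.
\item In the first item, equal orders make $k/d$ and $\ell/d$ both odd, so $2^d+1\mid h$.
\end{itemize}

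The one place to tidy is the first item when the orders differ. Having fixed $\ord_2(k)<\ord_2(\ell)$, your sentence ``$k/d$ is even here'' is false. In that case $\ord_2(d)=\ord_2(k)$, so $k/d$ is odd and $\ell/d$ is even.

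The detour through $\gcd(2^k-1,2^\ell+1)=1$ also gives nothing new. It only yields $h\mid 2^k+1$, which you already knew.

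Delete both and argue directly. When the orders differ, exactly one of $k/d$ and $\ell/d$ is even, namely the one belonging to the larger $2$-adic order. Suppose it is $\ell/d$. Then $2d\mid\ell$, so $h\mid 2^{2d}-1\mid 2^\ell-1$. Together with $h\mid 2^\ell+1$ this gives $h\mid 2$, and $h=1$ because $h$ is odd. The case where $k/d$ is even is symmetric.

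This is the case analysis you ended with, so the proof stands once that paragraph is cleaned up.
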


We recall some results about degree-$1$ rational functions from \cite{Z-Redei}:

\begin{lemma}\label{mu}
A degree-$1$ $\rho(X)\in\mybar\F_q(X)$ permutes $\mu_{q+1}$ if and only if $\rho(X)=(b^qX+a^q)/(aX+b)$ for some $a,b\in\F_{q^2}$ with $a^{q+1}\ne b^{q+1}$.
\end{lemma}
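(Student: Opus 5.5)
The plan is to prove the two implications of Lemma~\ref{mu} separately. The only facts needed are that $x^q=x^{-1}$ for every $x\in\mu_{q+1}$, and that two degree-$1$ rational functions agreeing at three points of $\bP^1(\mybar\F_q)$ are equal.

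\emph{Sufficiency.} Let $\rho(X)=(b^qX+a^q)/(aX+b)$ with $a,b\in\F_{q^2}$ and $a^{q+1}\ne b^{q+1}$.
\begin{itemize}
\item The determinant of $\rho$ is $b^{q+1}-a^{q+1}\ne 0$, so $\rho$ has degree $1$.
\item For $x\in\mu_{q+1}$ the denominator $ax+b$ is nonzero. Indeed, $ax=-b$ would give $a^{q+1}=a^{q+1}x^{q+1}=b^{q+1}$ after taking $(q+1)$-th powers, a contradiction.
\item Since $a^{q^2}=a$ and $b^{q^2}=b$, we get
\[
\rho(x)^q=\frac{bx^{-1}+a}{a^qx^{-1}+b^q}=\frac{ax+b}{b^qx+a^q}=\rho(x)^{-1},
\]
so $\rho(x)\in\mu_{q+1}$. (If $b^qx+a^q=0$ then $\rho(x)=0$, while $\rho(x)^q$ would be infinite, which is absurd; so this case does not occur.)
\item Thus $\rho$ maps $\mu_{q+1}$ into itself. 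A degree-$1$ function is injective on $\bP^1$, so it permutes $\mu_{q+1}$.
\end{itemize}

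\emph{Necessity.} Suppose the degree-$1$ function $\rho$ permutes $\mu_{q+1}$.
\begin{itemize}
\item For $x\in\mu_{q+1}$ we have $\rho(x)^q=\rho^{(q)}(x^q)=\rho^{(q)}(1/x)$, and also $\rho(x)^q=1/\rho(x)$ because $\rho(x)\in\mu_{q+1}$.
\item Hence $\rho(X)$ and $\sigma(X)\colonequals 1/\rho^{(q)}(1/X)$ agree on the $q+1\ge 3$ points of $\mu_{q+1}$. Both have degree $1$, so $\rho=\sigma$.
\item Write $\rho(X)=(\alpha X+\beta)/(\gamma X+\delta)$. Then
\[
\sigma(X)=\frac{\delta^qX+\gamma^q}{\beta^qX+\alpha^q},
\]
so there is $\lambda\in\mybar\F_q^*$ with $(\alpha,\beta,\gamma,\delta)=\lambda(\delta^q,\gamma^q,\beta^q,\alpha^q)$.
\end{itemize}

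The one step needing care is normalizing $\lambda$ to $1$.
\begin{itemize}
\item Rescaling all four coefficients by $\nu\in\mybar\F_q^*$ does not change $\rho$, and it replaces $\lambda$ by $\lambda\nu^{1-q}$.
\item Choose $\nu$ with $\nu^{q-1}=\lambda$, which is possible in $\mybar\F_q$. After this rescaling, $\alpha=\delta^q$, $\beta=\gamma^q$, $\gamma=\beta^q$ and $\delta=\alpha^q$.
\item Therefore $\delta=\delta^{q^2}$ and $\gamma=\gamma^{q^2}$, so $b\colonequals\delta$ and $a\colonequals\gamma$ lie in $\F_{q^2}$.
\item Then $\rho(X)=(b^qX+a^q)/(aX+b)$.
\item The degree-$1$ condition $\alpha\delta-\beta\gamma\ne0$ becomes $b^{q+1}\ne a^{q+1}$, which finishes the proof.
\end{itemize}
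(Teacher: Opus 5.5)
The paper gives no proof of this lemma. It is recalled from \cite{Z-Redei}, so there is no internal argument to compare against, and your proposal supplies a self-contained proof. It is correct.

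\emph{Sufficiency.} This is a direct check and it works. Your parenthetical about $b^qx+a^q=0$ can be replaced by a cleaner observation: $a^qx^{-1}+b^q=(ax+b)^q\neq 0$, so $b^qx+a^q=x(ax+b)^q\neq 0$ holds automatically.

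\emph{Necessity.} This is a clean rigidity argument. The functional equation $\rho(X)=1/\rho^{(q)}(1/X)$ holds on the $q+1\ge 3$ points of $\mu_{q+1}$, hence identically. The proportionality scalar is then absorbed by choosing $\nu$ with $\nu^{q-1}=\lambda$ in $\mybar\F_q$, which forces $\gamma^{q^2}=\gamma$ and $\delta^{q^2}=\delta$.

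\emph{Comparison with another route.} One could instead conjugate by a degree-one map sending $\bP^1(\F_q)$ onto $\mu_{q+1}$. This identifies the degree-one functions permuting $\mu_{q+1}$ with a conjugate of $\mathrm{PGL}_2(\F_q)$, and one then checks that the stated shape yields exactly $q^3-q$ distinct functions. Your argument avoids both the counting and any reliance on Lemma~\ref{Ftomu}. The same template also proves Lemmas~\ref{Ftomu} and~\ref{mutoF}: take a functional equation valid on at least three points, deduce that it holds identically, and normalize the scalar by a $(q-1)$-th root.
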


\begin{lemma}\label{Ftomu}
A degree-$1$ $\rho(X)\in\mybar\F_q(X)$ maps\/ $\bP^1(\F_q)$ onto $\mu_{q+1}$ if and only if $\rho(X)=(a^qX+b^q)/(aX+b)$ for some $a,b\in\F_{q^2}$ with $a^qb\ne ab^q$.
\end{lemma}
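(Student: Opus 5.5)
The plan is to prove the two directions separately: sufficiency by direct evaluation plus a counting argument, and necessity by reducing to Lemma~\ref{mu} through composition with one fixed map of the required form.

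\emph{Sufficiency.} Let $\rho(X)=(a^qX+b^q)/(aX+b)$ with $a,b\in\F_{q^2}$ and $a^qb\ne ab^q$.
\begin{itemize}
\item The quantity $a^qb-ab^q$ is the determinant of the matrix of $\rho$, so $\rho$ has degree $1$. It is therefore injective on $\bP^1(\mybar\F_q)$.
\item Note that $a\ne 0$: if $a=0$ then $a^qb-ab^q=0$, contrary to hypothesis.
\item Take $x\in\F_q$. Then $ax+b\ne0$, because $ax+b=0$ would give $-b/a=x\in\F_q$, hence $(b/a)^q=b/a$, i.e.\ $a^qb=ab^q$.
\item Since $x^q=x$, we get $\rho(x)=(ax+b)^q/(ax+b)=(ax+b)^{q-1}$. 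This lies in $\mu_{q+1}$ because $ax+b\in\F_{q^2}^*$.
\item At infinity, $\rho(\infty)=a^{q-1}\in\mu_{q+1}$.
\end{itemize}
So $\rho$ maps the $(q+1)$-element set $\bP^1(\F_q)$ injectively into the $(q+1)$-element set $\mu_{q+1}$. It is therefore onto.

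\emph{Necessity.} Fix $\omega\in\F_{q^2}\setminus\F_q$ and let $\rho_0(X)=(X+\omega^q)/(X+\omega)$. This is the case $a=1$, $b=\omega$, and $\omega\ne\omega^q$, so by the sufficiency part $\rho_0$ maps $\bP^1(\F_q)$ onto $\mu_{q+1}$. Now suppose a degree-$1$ $\rho$ maps $\bP^1(\F_q)$ onto $\mu_{q+1}$.
\begin{itemize}
\item Then $\sigma\colonequals\rho\circ\rho_0^{-1}$ is a degree-$1$ function permuting $\mu_{q+1}$.
\item By Lemma~\ref{mu}, $\sigma(X)=(B^qX+A^q)/(AX+B)$ for some $A,B\in\F_{q^2}$ with $A^{q+1}\ne B^{q+1}$.
\item Hence $\rho=\sigma\circ\rho_0$. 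Substituting and clearing the denominator $X+\omega$ gives
\[
\rho(X)=\frac{B^q(X+\omega^q)+A^q(X+\omega)}{A(X+\omega^q)+B(X+\omega)}.
\]
\item Set $a\colonequals A+B$ and $b\colonequals A\omega^q+B\omega$, both in $\F_{q^2}$. Using $\omega^{q^2}=\omega$, we have $a^q=A^q+B^q$ and $b^q=A^q\omega+B^q\omega^q$.
\item So the numerator is $a^qX+b^q$ and the denominator is $aX+b$, i.e.\ $\rho(X)=(a^qX+b^q)/(aX+b)$.
\item Finally, $\rho$ has degree $1$, so its determinant $a^qb-ab^q$ is nonzero, which is the required condition.
\end{itemize}

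The only nonroutine step is choosing a convenient base map $\rho_0$ so that the composition stays visibly in the claimed normal form. The coefficient bookkeeping with $\omega^{q^2}=\omega$ should be the sole place needing care.
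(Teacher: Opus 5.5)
Your proof is correct. The paper gives no argument of its own for this lemma: it is only recalled from \cite{Z-Redei}, together with Lemmas~\ref{mu} and~\ref{mutoF}, so there is no internal proof to compare against.

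Your sufficiency direction is complete. It uses $\rho(x)=(ax+b)^{q-1}$ for $x\in\F_q$ and $\rho(\infty)=a^{q-1}$, then injectivity and a count of $q+1$ points on each side. The necessity direction is also sound. The bookkeeping with $a=A+B$, $b=A\omega^q+B\omega$ checks out, and one can confirm directly that
\[
a^qb-ab^q=(A^{q+1}-B^{q+1})(\omega^q-\omega)\ne 0.
\]

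The one thing to be aware of is that your necessity argument rests on Lemma~\ref{mu}, which the paper also only quotes. Lemma~\ref{mu} is itself usually proved by conjugating with a map like your $\rho_0$. You can make the argument self-contained by composing on the other side:
\begin{itemize}
\item $\phi\colonequals\rho_0^{-1}\circ\rho$ permutes $\bP^1(\F_q)$.
\item A degree-$1$ function is determined by its values at $0,1,\infty$, and some $\F_q$-rational degree-$1$ map realizes any three distinct values in $\bP^1(\F_q)$. Hence $\phi(X)=(\alpha X+\beta)/(\gamma X+\eta)$ with $\alpha,\beta,\gamma,\eta\in\F_q$.
\item Then $\rho=\rho_0\circ\phi=(a^qX+b^q)/(aX+b)$ with $a=\alpha+\omega\gamma$ and $b=\beta+\omega\eta$.
\end{itemize}
That route uses only the elementary fact that degree-$1$ maps preserving $\bP^1(\F_q)$ have $\F_q$-coefficients. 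It also shows that the maps in question form exactly the coset $\rho_0\circ\mathrm{PGL}_2(\F_q)$. Your route instead buys a one-step reduction to an already-stated lemma, at the cost of inheriting that lemma's unproved status within the paper.
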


\begin{lemma}\label{mutoF}
A degree-$1$ $\rho(X)\in\mybar\F_q(X)$ maps $\mu_{q+1}$ onto\/ $\bP^1(\F_q)$ if and only if $\rho(X)=(bX+b^q)/(aX+a^q)$ for some $a,b\in\F_{q^2}$ with $a^qb\ne ab^q$.
\end{lemma}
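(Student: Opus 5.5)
The plan is to deduce this from Lemma~\ref{Ftomu} by passing to inverses. Since $\rho(X)$ has degree $1$, it induces a bijection on $\bP^1(\mybar\F_q)$. Both $\mu_{q+1}$ and $\bP^1(\F_q)$ have exactly $q+1$ elements. Hence $\rho$ maps $\mu_{q+1}$ onto $\bP^1(\F_q)$ if and only if the inverse degree-$1$ function $\rho^{-1}(X)$ maps $\bP^1(\F_q)$ onto $\mu_{q+1}$. By Lemma~\ref{Ftomu}, the latter holds if and only if
\[
\rho^{-1}(X)=\frac{a^qX+b^q}{aX+b}
\]
for some $a,b\in\F_{q^2}$ with $a^qb\ne ab^q$.

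The next step is to invert this expression explicitly. Solving $Y=(a^qX+b^q)/(aX+b)$ for $X$ gives
\[
\rho(X)=\frac{bX-b^q}{-aX+a^q}.
\]
This is almost the desired shape, except for the signs. To fix them, I would precompose with $X\mapsto -X$, which permutes $\mu_{q+1}$ since $(-1)^{q+1}=1$ in every characteristic. Thus $\rho$ maps $\mu_{q+1}$ onto $\bP^1(\F_q)$ if and only if $\rho(-X)$ does. Here
\[
\rho(-X)=\frac{-bX-b^q}{aX+a^q}=\frac{b'X+b'^q}{aX+a^q},
\qquad b'\colonequals -b,
\]
using $(-b)^q=-b^q$, valid for odd $q$ and trivially for even $q$. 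The nondegeneracy condition transfers, since $a^qb'\ne ab'^q$ is equivalent to $a^qb\ne ab^q$.

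Because $b\mapsto -b$ is a bijection of $\F_{q^2}$, the class of functions $(bX+b^q)/(aX+a^q)$ with $a^qb\ne ab^q$ is closed under $X\mapsto -X$. Chaining the equivalences therefore gives both directions of Lemma~\ref{mutoF} at once.

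The only step needing care is the sign bookkeeping, and keeping the composition order straight when replacing $\rho$ by $\rho(-X)$. As a sanity check, or as an alternative direct proof of sufficiency, one can verify it by hand. For $x\in\mu_{q+1}$ we have $x^q=1/x$, so
\[
\rho(x)^q=\frac{b^q/x+b}{a^q/x+a}=\rho(x),
\]
which gives $\rho(x)\in\bP^1(\F_q)$. Injectivity comes from $\deg\rho=1$, which is where $a^qb\ne ab^q$ is used, and the cardinalities then give surjectivity.
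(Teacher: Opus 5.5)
Your argument is correct. The paper does not prove Lemma~\ref{mutoF} at all; it quotes it, together with Lemmas~\ref{mu} and~\ref{Ftomu}, from \cite{Z-Redei}. Your route is therefore genuinely different: you show that Lemma~\ref{mutoF} is a formal consequence of Lemma~\ref{Ftomu}. The steps are these:
\begin{enumerate}
\item Since $\rho$ is a bijection of the projective line over the algebraic closure, $\rho(\mu_{q+1})=\bP^1(\F_q)$ is equivalent to $\rho^{-1}(\bP^1(\F_q))=\mu_{q+1}$.
\item Inverting $(a^qX+b^q)/(aX+b)$ gives $(bX-b^q)/(-aX+a^q)$.
\item Precomposing with the involution $X\mapsto -X$, which preserves $\mu_{q+1}$, carries this family onto the family $(bX+b^q)/(aX+a^q)$ via $b\mapsto -b$, with the condition $a^qb\ne ab^q$ unchanged.
\end{enumerate}

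What this buys is economy: of the three imported facts, only Lemmas~\ref{mu} and~\ref{Ftomu} need an independent proof. The price is that your necessity direction is only as self-contained as Lemma~\ref{Ftomu}. Your direct check using $x^q=x^{-1}$ on $\mu_{q+1}$, together with injectivity and counting, does prove sufficiency from scratch.

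One wording point. The bijection $b\mapsto -b$ proves that precomposing the family $(bX-b^q)/(-aX+a^q)$ with $-X$ yields exactly the family $(bX+b^q)/(aX+a^q)$, and that is all your chain of equivalences needs. Your literal statement, that the family $(bX+b^q)/(aX+a^q)$ is itself closed under $X\mapsto -X$, is also true. For odd $q$, however, it is proved by the rescaling $(a,b)\mapsto(\lambda a,\lambda b)$ with $\lambda^{q-1}=-1$, not by $b\mapsto -b$.
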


As a direct consequence of Lemmas~\ref{mu}, \ref{Ftomu}, and \ref{mutoF}, we have 

\begin{cor}\label{deg1}
Assume that $q$ is a prime power with $\gcd(3,q)=1$. Suppose $\rho(X)\colonequals (\omega X+1)/(X+\omega)$ for some $\omega\in\mybar\F_q$ with $\omega^2+\omega+1=0$. Then
\begin{itemize}
\item if $q\equiv 1\pmod 3$ then $\rho(X)$ permutes both\/ $\bP^1(\F_q)$ and $\mu_{q+1}$;
\item if $q\equiv 2\pmod 3$ then $\rho(X)$ swaps\/ $\bP^1(\F_q)$ and $\mu_{q+1}$, in the sense that $\rho(X)$ maps\/ $\bP^1(\F_q)$ onto $\mu_{q+1}$ and it maps $\mu_{q+1}$ onto\/ $\bP^1(\F_q)$; 
\item if $q$ is even then $\rho(X)$ is an involution, i.e.\ $\rho^{-1}(X)=\rho(X)$.
\end{itemize}  
\end{cor}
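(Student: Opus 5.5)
The plan is to check directly, in each case, that $\rho(X)$ can be rewritten in the normal form required by Lemmas~\ref{mu}, \ref{Ftomu}, and \ref{mutoF}. The only freedom needed is to multiply numerator and denominator by a common nonzero scalar.

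First I will check that $\rho$ has degree $1$, i.e.\ $\omega^2-1\ne 0$. Since $\gcd(3,q)=1$, we cannot have $\omega=1$, because then $\omega^2+\omega+1=3\ne 0$. We also cannot have $\omega=-1$, because then $\omega^2+\omega+1=1\ne 0$.

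Next I will use the fact that $\omega$ is a primitive cube root of unity. Hence $\omega\in\F_q$ if $q\equiv 1\pmod 3$. If $q\equiv 2\pmod 3$, then $\omega\in\F_{q^2}\setminus\F_q$ with $\omega^q=\omega^2=\omega^{-1}$; in this case $\omega\in\mu_{q+1}$, since $3\mid q+1$.

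\textbf{Case $q\equiv 1\pmod 3$.} Here $\rho$ has coefficients in $\F_q$, so it permutes $\bP^1(\F_q)$. For $\mu_{q+1}$ I will apply Lemma~\ref{mu} with $a=1$ and $b=\omega$. Then $b^q=\omega$ and $a^q=1$, so $(b^qX+a^q)/(aX+b)=\rho(X)$. The required condition $a^{q+1}\ne b^{q+1}$ reads $1\ne\omega^2$, which holds.

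\textbf{Case $q\equiv 2\pmod 3$.} The map $\lambda\mapsto\lambda^{q-1}$ sends $\F_{q^2}^*$ onto $\mu_{q+1}$, so I will choose $\lambda\in\F_{q^2}^*$ with $\lambda^{q-1}=\omega$, i.e.\ $\lambda^q=\lambda\omega$. Then I write
\[
\rho(X)=\frac{\lambda\omega X+\lambda}{\lambda X+\lambda\omega}.
\]
For Lemma~\ref{Ftomu}, put $a=\lambda$ and $b=\lambda\omega$. Then $a^q=\lambda\omega$ and $b^q=\lambda\omega\cdot\omega^2=\lambda$, which matches the displayed form. The condition $a^qb\ne ab^q$ becomes $\lambda^2\omega^2\ne\lambda^2$, which holds, so $\rho$ maps $\bP^1(\F_q)$ onto $\mu_{q+1}$.

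For Lemma~\ref{mutoF}, put $a=\lambda$ and $b=\lambda\omega$ in the form $(bX+b^q)/(aX+a^q)$. This gives $(\lambda\omega X+\lambda)/(\lambda X+\lambda\omega)=\rho(X)$, with the same nondegeneracy condition. Hence $\rho$ maps $\mu_{q+1}$ onto $\bP^1(\F_q)$.

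\textbf{Case $q$ even.} I will compute $\rho\circ\rho$ using the matrix $\begin{pmatrix}\omega&1\\1&\omega\end{pmatrix}$. Its square is
\[
\begin{pmatrix}\omega^2+1&2\omega\\2\omega&\omega^2+1\end{pmatrix}=\omega I,
\]
since we are in characteristic $2$ and $\omega^2+1=\omega\ne 0$. This is a nonzero scalar matrix, so $\rho\circ\rho=X$.

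The only mildly delicate step is producing the scalar $\lambda$ in the case $q\equiv 2\pmod 3$. This rests on the surjectivity of $\lambda\mapsto\lambda^{q-1}$ onto $\mu_{q+1}$ together with $\omega\in\mu_{q+1}$. Everything else is routine substitution.
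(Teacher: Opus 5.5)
Your proof is correct and takes the paper's route. The paper simply states the corollary as a direct consequence of Lemmas~\ref{mu}, \ref{Ftomu}, and \ref{mutoF}, and your explicit choices supply exactly the details it leaves implicit: $a=1$, $b=\omega$ when $q\equiv 1\pmod 3$; $a=\lambda$, $b=\lambda\omega$ with $\lambda^{q-1}=\omega$ when $q\equiv 2\pmod 3$; and the scalar-matrix check $M^2=\omega I$ in characteristic $2$ for the involution.
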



\section{Proofs of the main results}\label{proof}

In this section we show the main results in this paper. The following result can be viewed an additive analogue of \cite[Lemma~2.1]{Zlem} and \cite[Lemma~2.2]{Z-Redei}:

\begin{lemma}\label{step1}
Assume $q\colonequals p^k$ for a prime $p$ and an integer $k>0$. Suppose 
\[
f(X)\colonequals L(X) + B(X) \circ (X^q-X) + (X^{q^2}+X^q+X) \circ C(X) 
\]
for some $B(X), C(X) \in \F_{q^3}[X]$ and some $L(X)\colonequals\sum_{i=0}^r a_iX^{p^i}\in\F_q[X]$. Then $f(X)$ permutes\/ $\F_{q^3}$ if and only if both of the following hold:
\begin{itemize}
\item $L(X) + (X^q-X)\circ B(X)$ permutes the set of roots of $X^{q^2}+X^q+X$;
\item $L(X) + (X^{q^2}+X^q+X) \circ C(X)$ is injective on $u+\F_q$ for any $u\in\F_{q^3}$.
\end{itemize}
\end{lemma}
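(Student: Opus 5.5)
The plan is to treat $\F_{q^3}$ as fibred over $\Gamma$ via the additive map $\phi(X)\colonequals X^q-X$, and to use the standard criterion: a map that sends fibres into fibres is a bijection if and only if the induced map on the base is a bijection and the map is injective on every fibre. Write $T(X)\colonequals X^{q^2}+X^q+X$. I will use the following facts about $\phi$ and $T$:
\begin{itemize}
\item $\phi$ is $\F_q$-linear on $\F_{q^3}$ with kernel $\F_q$, so its fibres are exactly the cosets $u+\F_q$;
\item its image is exactly $\Gamma=\ker T$, since $T\circ\phi=X^{q^3}-X$ vanishes on $\F_{q^3}$ and $|\Gamma|=q^2=|\F_{q^3}/\F_q|$;
\item $\phi\circ T=X^{q^3}-X$ also vanishes on $\F_{q^3}$.
\end{itemize}
Since $L$ is additive with coefficients in $\F_q$, it commutes with $\phi$ as functions on $\F_{q^3}$. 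Indeed, $L(x)^q=L(x^q)$ because each $a_i^q=a_i$, and additivity then gives $\phi\circ L=L\circ\phi$.

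The first step is to compute $\phi\circ f$ on $\F_{q^3}$:
\[
\phi\circ f=\phi\circ L+\phi\circ B\circ\phi+\phi\circ T\circ C=(L+\phi\circ B)\circ\phi .
\]
Here the middle equality uses $\phi\circ L=L\circ\phi$ and $\phi\circ T=0$ on $\F_{q^3}$. Put $g\colonequals L+\phi\circ B$. This identity shows two things. First, $f$ maps each fibre $u+\F_q$ into the fibre over $g(\phi(u))$. Second, $g$ maps $\Gamma=\phi(\F_{q^3})$ into $\phi(\F_{q^3})=\Gamma$. So $g$ is exactly the map induced by $f$ on the base $\Gamma$.

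The second step is to compute $f$ on a single fibre. For $t\in\F_q$ we have $\phi(u+t)=\phi(u)$, so
\[
f(u+t)=B(\phi(u))+\bigl(L+T\circ C\bigr)(u+t).
\]
Thus, on $u+\F_q$, $f$ differs from $h\colonequals L+T\circ C$ by the constant $B(\phi(u))$. Hence $f$ is injective on $u+\F_q$ if and only if $h$ is.

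Finally I combine the two steps. If $g$ permutes $\Gamma$ and $h$ is injective on every coset, then $f(x)=f(y)$ implies $g(\phi(x))=g(\phi(y))$, hence $\phi(x)=\phi(y)$, so $x$ and $y$ lie in the same coset, and injectivity there gives $x=y$. Conversely, suppose $f$ is a permutation. Then $f$ is injective on each coset, hence so is $h$. Also $g\circ\phi=\phi\circ f$ is surjective onto $\Gamma$, so $g$ is surjective on the finite set $\Gamma$, hence bijective.

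No step is a real obstacle. The only points needing care are checking that $L$ commutes with $\phi$, which requires $L$ to be additive with $\F_q$-coefficients, and that $\phi(\F_{q^3})$ is all of $\Gamma$.
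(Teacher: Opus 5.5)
Your proof is correct and follows essentially the same route as the paper: the intertwining identity $(X^q-X)\circ f=\bigl(L+(X^q-X)\circ B\bigr)\circ(X^q-X)$ on $\F_{q^3}$, obtained from $(X^q-X)\circ L=L\circ(X^q-X)$ and $(X^q-X)\circ(X^{q^2}+X^q+X)=X^{q^3}-X$, followed by the observation that $B\bigl(u^q-u\bigr)$ is constant on each coset $u+\F_q$. You also supply details the paper leaves implicit, namely that the image of $X^q-X$ on $\F_{q^3}$ is exactly $\Gamma$ and the two directions of the fibre criterion.
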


\begin{proof}
Let $\Gamma$ be the set of roots of $X^{q^2}+X^q+X$. It is clear that $X^{q-1}$ induces a surjective map from $\F_{q^3}$ onto $\Gamma$, each of whose fibers has the form $u+\F_q$ with $u\in\F_{q^3}$. Write $g(X)\colonequals L(X) + (X^q-X)\circ B(X)$. Since $(X^q-X)\circ L(X) = L(X)\circ (X^q-X)$ and $(X^q-X)\circ (X^{q^2}+X^q+X) = X^{q^3}-X$, it follows that $(X^q-X)\circ f(X) = g(X)\circ (X^q-X)$ as maps on $\F_{q^3}$. Hence $f(X)$ permutes $\F_{q^3}$ if and only if $g(X)$ permutes $\Gamma$ and $f(X)$ is injective on on $u+\F_q$ for any $u\in\F_{q^3}$. Suppose $u\in\F_{q^3}$, since $X^q-X$ is identical on $u+\F_q$, we know $f(X)$ is injective on $u+\F_q$ if and only if $L(X) + (X^{q^2}+X^q+X) \circ C(X)$ is injective on $u+\F_q$.
\end{proof}

The next result gives a choice of $L(X)$ and $C(X)$ which satisfies the second bullet item of Lemma~\ref{step1}. We will use it in the proof of Theorem~\ref{main1}.

\begin{lemma}\label{fiber}
Assume $q=2^k$ and $Q=2^{\ell}$ for some integers $k>0$ and $\ell\ge 0$. Then $(X^{q^2}+X^q+X)\circ X^{Q+1}$ is injective on $u+\F_q$ for any $u\in\F_{q^3}$ if and only if $\gcd(q-1,Q+1)=1$, or equivalently $\ord_2(k)\le\ord_2(\ell)$.  
\end{lemma}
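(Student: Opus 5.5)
The plan is to compute the map $a\mapsto T(u+a)$ on $\F_q$ explicitly, where $T(X)\colonequals(X^{q^2}+X^q+X)\circ X^{Q+1}$, i.e.\ $T(x)=\Tr_{\F_{q^3}/\F_q}(x^{Q+1})$ for $x\in\F_{q^3}$. The aim is to show that it is a translate of the monomial map $a\mapsto a^{Q+1}$ on $\F_q$. Fix $u\in\F_{q^3}$ and let $a\in\F_q$. Since $Q$ is a power of $p=2$, we have
\[
(u+a)^{Q+1}=u^{Q+1}+a\,u^Q+a^Q u+a^{Q+1}.
\]
Applying the $\F_q$-linear trace map, and using $a,a^Q\in\F_q$, we get $\Tr(a u^Q)=a\,\Tr(u)^Q$, $\Tr(a^Q u)=a^Q\Tr(u)$, and $\Tr(a^{Q+1})=3a^{Q+1}=a^{Q+1}$ because the characteristic is $2$.

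Next, write $t\colonequals\Tr_{\F_{q^3}/\F_q}(u)\in\F_q$. The computation above gives
\[
T(u+a)=\Tr(u^{Q+1})+a^{Q+1}+t^Q a+t\,a^Q=(a+t)^{Q+1}+t^{Q+1}+\Tr(u^{Q+1}),
\]
where the last equality again uses that $(a+t)^{Q+1}=a^{Q+1}+a^Qt+at^Q+t^{Q+1}$ in characteristic $2$. Thus, as a function of $a\in\F_q$, the map $a\mapsto T(u+a)$ is the composition of the translation $a\mapsto a+t$ of $\F_q$, then $X^{Q+1}$, then a translation by a constant. Hence $T$ is injective on $u+\F_q$ if and only if $X^{Q+1}$ is injective on $\F_q$, which is a condition independent of $u$.

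Finally, $X^{Q+1}$ permutes $\F_q$ if and only if $\gcd(q-1,Q+1)=1$, by the standard criterion for monomials. The second bullet of Lemma~\ref{gcd}, applied with $2^k=q$ and $2^\ell=Q$, converts this into $\ord_2(k)\le\ord_2(\ell)$. There is no real obstacle here. The only point needing care is the identity $\Tr(a^{Q+1})=a^{Q+1}$, which relies on characteristic $2$; this is also what makes the cross terms combine into a perfect $(Q+1)$-th power of $a+t$.
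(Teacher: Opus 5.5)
Your proposal is correct and follows essentially the same route as the paper's proof. Both expand $(u+a)^{Q+1}$, take the trace to rewrite $T(u+a)$ as $\bigl(a+\Tr(u)\bigr)^{Q+1}$ plus a constant depending only on $u$, and then reduce to $X^{Q+1}$ permuting $\F_q$ via Lemma~\ref{gcd}. Your explicit remark that $\Tr(a^{Q+1})=3a^{Q+1}=a^{Q+1}$ uses characteristic $2$ is a step the paper leaves implicit.
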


\begin{proof}
Let us fix an element $u\in\F_{q^3}$. For any $z\in\F_q$, we have 
\[
(u+z)^{Q+1}=u^{Q+1}+u^Qz+uz^Q+z^{Q+1},  
\]
so that we can compute
\begin{align*}
& \;\;\;\;\; (u+z)^{q^2(Q+1)}+(u+z)^{q(Q+1)}+(u+z)^{Q+1} \\
&= (u^{q^2(Q+1)}+u^{q(Q+1)}+u^{Q+1}) + (u^{q^2Q}+u^{qQ}+u^{Q})z + (u^{q^2}+u^{q}+u)z^Q + z^{Q+1} \\
&= (u^{q^2(Q+1)}+u^{q(Q+1)}+u^{Q+1}) + (u^{q^2}+u^q+u)^Qz + (u^{q^2}+u^{q}+u)z^Q + z^{Q+1} \\
&= (u^{q^2(Q+1)}+u^{q(Q+1)}+u^{Q+1}) + (u^{q^2}+u^q+u)^{Q+1} + \bigl( (u^{q^2}+u^q+u) +z \bigr)^{Q+1}.
\end{align*}
Since $u\in\F_{q^3}$ we have $u^{q^2}+u^q+u\in\F_q$. It follows that $(X^{q^2}+X^q+X)\circ X^{Q+1}$ is injective on $u+\F_q$ if and only if $X^{Q+1}$ permutes $\F_q$, or equivalently $\gcd(q-1,Q+1)=1$, i.e.\ $\ord_2(k)\le\ord_2(\ell)$. This concludes the proof.
\end{proof}

The following result is a variant of a procedure introduced in \cite{Z-Redei}:

\begin{lemma}\label{step2}
Suppose $r>0$ is an integer and $B(X)\in\F_{q^3}[X]$. Then the following are equivalent:
\begin{enumerate}
\item $X^r B(X^{q-1})$ permutes $\Gamma$;
\item $\gcd(r,q-1)=1$ and $X^r B(X)^{q-1}$ permutes $\Lambda$;
\item $\gcd(r,q-1)=1$, $B(X)$ has no roots in $\Lambda$, and the rational function $X^r B^{(q)}(-X^{-1}-1) / B(X)$ permutes $\Lambda$.
\end{enumerate}
\end{lemma}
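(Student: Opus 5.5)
The plan is to mimic the multiplicative reduction of \cite{Z-Redei}, using the map $x\mapsto x^{q-1}$ from $\Gamma^*$ onto $\Lambda$. Throughout I write $g(X)\colonequals X^rB(X^{q-1})$ and $h(X)\colonequals X^rB(X)^{q-1}$. Since $r>0$ we have $g(0)=0$, so $g$ permutes $\Gamma$ if and only if $g$ permutes $\Gamma^*$.

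First I would set up the reduction map.
\begin{itemize}
\item For $x\in\F_{q^3}^*$, dividing $x^{q^2}+x^q+x=0$ by $x$ shows that $x\in\Gamma^*$ if and only if $y\colonequals x^{q-1}$ satisfies $y^{q+1}+y+1=0$, i.e.\ $y\in\Lambda$.
\item $\Gamma$ is an $\F_q$-subspace of dimension $2$, since $X^{q^2}+X^q+X$ is the trace $\F_{q^3}\to\F_q$, which is onto. Hence $\Gamma^*$ is a union of $q+1$ cosets $x\F_q^*$.
\item Every $y\in\Lambda$ arises as such a $y=x^{q-1}$. Indeed, $y\in\Lambda$ gives $y^{q^2+q+1}=y\cdot y^{q}\cdot y^{q^2}$, and the relation $y^q=-1-y^{-1}$, iterated, shows this norm equals $1$. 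Hence $y$ lies in the image of $x\mapsto x^{q-1}$ on $\F_{q^3}^*$, and by the first item the preimage lies in $\Gamma^*$.
\end{itemize}
So $x\mapsto x^{q-1}$ maps $\Gamma^*$ onto $\Lambda$, its fibers are exactly the cosets $x\F_q^*$, and $|\Lambda|=q+1$.

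The key identity is $g(x)^{q-1}=x^{r(q-1)}B(x^{q-1})^{q-1}=h(x^{q-1})$, together with $g(xz)=z^rg(x)$ for $z\in\F_q^*$.

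For (1)$\Rightarrow$(2): if $g$ permutes $\Gamma^*$, then the identity shows that $h$ maps $\Lambda$ onto $\Lambda$, hence permutes it. Moreover $g$ must be injective on each coset $x\F_q^*$, which forces $z\mapsto z^r$ to be injective on $\F_q^*$, i.e.\ $\gcd(r,q-1)=1$.

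For (2)$\Rightarrow$(1): take $x\in\Gamma^*$. Then $h(x^{q-1})\in\Lambda$, and $0\notin\Lambda$, so $g(x)\ne0$. Set $w\colonequals g(x)\in\F_{q^3}^*$. Since $w^{q-1}\in\Lambda$, we get
\[
w^{q^2}+w^q+w=w\bigl((w^{q-1})^{q+1}+w^{q-1}+1\bigr)=0,
\]
so $g(\Gamma^*)\subseteq\Gamma^*$. Injectivity follows in two cases. If $x,x'$ lie in different cosets, then $g(x)\ne g(x')$ because $h$ is injective on $\Lambda$. If $x'=xz$ with $z\in\F_q^*$, then $g(xz)=z^rg(x)$ with $g(x)\ne0$ and $\gcd(r,q-1)=1$, so $g(x)\ne g(x')$ unless $z=1$. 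Hence $g$ is injective on the finite set $\Gamma^*$, so it permutes it.

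For (2)$\Leftrightarrow$(3): if $B(y)=0$ for some $y\in\Lambda$, then $h(y)=0\notin\Lambda$, so both (2) and (3) fail. Otherwise, for $y\in\Lambda\subseteq\F_{q^3}$ we have $B(y)^q=B^{(q)}(y^q)$, and the defining relation gives $y^q=-1-y^{-1}$. Thus
\[
h(y)=y^rB^{(q)}(-y^{-1}-1)/B(y)
\]
on $\Lambda$, and the two conditions coincide.

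The only step needing care is the surjectivity of $\Gamma^*\to\Lambda$ (equivalently $|\Lambda|=q+1$), which I would handle by the norm computation in the first list.
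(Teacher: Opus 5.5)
Your proof is correct and follows the paper's argument. You reduce via the $(q-1)$-th power map $\Gamma^*\to\Lambda$, whose fibers are the cosets $x\F_q^*$; you get $\gcd(r,q-1)=1$ from $g(xz)=z^rg(x)$; and you substitute $y^q=-y^{-1}-1$ on $\Lambda$ to pass from (2) to (3). The only difference is that you verify the surjectivity of $\Gamma^*\to\Lambda$ with the norm computation and write out both directions, both of which the paper treats as clear.
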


\begin{proof}
It is clear that $X^{q-1}$ induces a surjective map from $\Gamma^*$ onto $\Lambda$, each of whose fibers has the the form $v\F_q^*$ with $v\in\Gamma^*$. Thus $X^r B(X^{q-1})$ permutes $\Gamma$ if and only if $X^r B(X)^{q-1}$ permutes $\Lambda$ and $X^r B(X^{q-1})$ is injective on $v\F_q^*$ for any $v\in\Gamma^*$. We may assume $B(X)$ has no roots in $\Lambda$, since it is implied by each item. For any $v\in\Gamma^*$, since $B(X^{q-1})$ is identically nonzero on $v\F_q^*$, it follows that $X^r B(X^{q-1})$ is injective on $v\F_q^*$ if and only if $X^r$ permutes $\F_q^*$, i.e.\ $\gcd(r,q-1)=1$. It follows that $X^r B(X^{q-1})$ is injective on $v\F_q^*$ for any $v\in\Gamma^*$ if and only if $\gcd(r,q-1)=1$. For any $z\in\Lambda$, we have $z^{q+1}+z+1=0$, which says $z^q=-z^{-1}-1$. We compute
\[
z^r B(z)^{q-1} = z^r B^{(q)}(z^q) / B(z) = z^r B^{(q)}(-z^{-1}-1) / B(z).
\]
It follows that $X^r B(X)^{q-1}$ permutes $\Lambda$ if and only if $X^r B^{(q)}(-X^{-1}-1) / B(X)$ permutes $\Lambda$. This concludes the proof.
\end{proof}

Next let us show Theorem~\ref{key} as follows.

\begin{proof}[Proof of Theorem~\ref{key}]
Since $(X^q-X)\circ X^{Q+1} = X^{Q+1}\bigl(X^{(q-1)(Q+1)}-1\bigr)$, by Lemma~\ref{step2} we know that $(X^q-X)\circ X^{Q+1}$ permutes $\Gamma$ if and only if $\gcd(q-1,Q+1)=1$, $X^{Q+1}-1$ has no roots in $\Lambda$, and 
\[
h(X)\colonequals \frac{X^{Q+1}\bigl((-X^{-1}-1)^{Q+1}-1\bigr)}{X^{Q+1}-1} = \frac{X^Q+X+1}{X^{Q+1}-1} 
\]
permutes $\Lambda$. It is enough to show that if $\gcd(q-1,Q+1)=1$ then $X^{Q+1}-1$ has no roots in $\Lambda$ and $h(X)$ permutes $\Lambda$. Henceforth we suppose that $\gcd(q-1,Q+1)=1$, which by Lemma~\ref{gcd} says that $p=2$ and $\ord_2(k)\le\ord_2(\ell)$. It follows that $\ord_2(3k)\le\ord_2(\ell)$, which by Lemma~\ref{gcd} again says $\gcd(q^3-1,Q+1)=1$, so that $X^{Q+1}$ permutes $\F_{q^3}$. If $X^{Q+1}-1$ has a root $v$ in $\Lambda$, then since $\Lambda\subseteq\F_{q^3}$ we have $v\in\F_{q^3}\cap\mu_{Q+1}$, so that $v=1$, which is impossible since $1\notin\Lambda$. It remains only to show that $h(X)$ permutes $\Lambda$. Since $X^{-1}+1$ permutes $\Lambda$, it reduces to show $\widetilde h(X)$ permutes $\Lambda$, where
\[
\widetilde h(X)\colonequals (X^{-1}+1)\circ h(X) = \frac{X^{Q+1}+X^Q+X}{X^Q+X+1}.
\]

We claim that that $\widetilde h(X)$ permutes $\bP^1(\F_{q^3})$, which concludes the proof. Pick $\omega\in\mybar\F_q$ with $\omega^2+\omega+1=0$. Write $\rho(X)\colonequals (\omega X+1)/(X+\omega)$. By Corollary~\ref{deg1} we know that $\rho(X)$ permutes $\bP^1(\F_{q^3})$ if $k$ is even, and $\rho(X)$ swaps $\bP^1(\F_{q^3})$ and $\mu_{q^3+1}$ if $k$ is odd. First we suppose $\ell$ is even, then $Q\equiv 1\pmod 3$. It is routine to verify that
\[
\rho(X)\circ X^{-(Q+1)}\circ\rho(X) = \frac{X^{Q+1}+X^Q+X}{X^Q+X+1} = \widetilde h(X).
\]
If $k$ is even, then both $\rho(X)$ and $X^{-(Q+1)}$ permutes $\bP^1(\F_{q^3})$, so that $\widetilde h(X)$ permutes $\bP^1(\F_{q^3})$. If $k$ is odd, then $\ord_2(3k)<\ord_2(\ell)$, which by Lemma~\ref{gcd} implies $\gcd(q^3+1,Q+1)=1$. It follows that $\rho(X)$ swaps $\bP^1(\F_{q^3})$ and $X^{-(Q+1)}$ permutes $\mu_{q^3+1}$, so that $\widetilde h(X)$ permutes $\bP^1(\F_{q^3})$. Next we suppose $\ell$ is odd, then $k$ is odd since $\ord_2(k)\le\ord_2(\ell)$. It follows that $\ord_2(\ell)=\ord_2(3k)=1$, which by Lemma~\ref{gcd} implies $\gcd(Q-1,q^3+1)=1$, so that $X^{Q-1}$ permutes $\mu_{q^3+1}$. It is routine to verify that
\[
\rho(X)\circ X^{Q-1}\circ\rho(X) = \frac{X^{Q+1}+X^Q+X}{X^Q+X+1} = \widetilde h(X).
\]
It follows that $\widetilde h(X)$ permutes $\bP^1(\F_{q^3})$, since $\rho(X)$ swaps $\bP^1(\F_{q^3})$ and $\mu_{q^3+1}$, and $X^{Q-1}$ permutes $\mu_{q^3+1}$. This concludes the proof.  
\end{proof}

Now we are ready to show Theorem~\ref{main1} as follows.

\begin{proof}[Proof of Theorem~\ref{main1}]
By Lemma~\ref{step1} we know that $f(X)$ permutes $\F_{q^3}$ if and only if $(X^q-X)\circ X^{Q+1}$ permutes $\Gamma$ and $(X^{q^2}+X^q+X)\circ X^{R+S}$ is injective on $u+\F_q$ for any $u\in\F_{q^3}$. By Theorem~\ref{key} we know that $(X^q-X)\circ X^{Q+1}$ permutes $\Gamma$ if and only if $\gcd(q-1,Q+1)=2$, which by Lemma~\ref{gcd} says that $p=2$ and $\ord_2(k)\le\ord_2(\ell)$. Henceforth we can suppose $p=2$. By Lemma~\ref{fiber} we know that $(X^{q^2}+X^q+X)\circ X^{R+S}$ is injective on $u+\F_q$ for any $u\in\F_{q^3}$ if and only if $\gcd(q-1,R+S)=1$, i.e.\ $\ord_2(k)\le\ord_2(m-n)$. Thus $f(X)$ permutes $\F_{q^3}$ if and only if $\gcd\bigl(q-1,(Q+1)(R+S)\bigr)=1$, or equivalently $p=2$ and $\ord_2(k)\le\min\bigl(\ord_2(\ell),\ord_2(m-n)\bigr)$.
\end{proof}

Next we show Corollary~\ref{main2} as follows.

\begin{proof}[Proof of Corollary~\ref{main2}]
Suppose $q=p^k$ and $Q=p^{\ell}$ for some prime $p$ and some integers $k>0$ and $\ell\ge 0$. We compute 
\[
X^{Q+1}\circ (X^q-X) = X^{qQ+q}-X^{qQ+1}-X^{Q+q}+X^{Q+1}.
\]
Let us apply Theorem~\ref{main1} by taking various choices of $c$, $R$, and $S$.

First, by taking $c=-1$, $R=Q$, and $S=1$ in Theorem~\ref{main1}, we know that 
\[
\widetilde f_1(X)\colonequals -X^{qQ+1}-X^{Q+q} - X^{q^2Q+q^2} 
\]
permutes $\F_{q^3}$ if and only if $\gcd(q-1,Q+1)=1$. Since $f_1(X) \equiv - \widetilde f_1(X)^q \pmod{X^{q^3}-X}$, we know that $f_1(X)$ and $-\widetilde f_1(X)^q$ induce the same map on $\F_{q^3}$. It follows that $f_1(X)$ permutes $\F_{q^3}$ if and only if $\gcd(q-1,Q+1)=1$.

Next, by taking $c=1$, $R=qQ$, and $S=1$ in Theorem~\ref{main1}, we know that 
\[
f_3(X) = X^{qQ+q}-X^{Q+q}+X^{Q+1}+X^{q^2Q+q}+X^{Q+q^2}
\]
permutes $\F_{q^3}$ if and only if $\gcd\bigl(q-1,(Q+1)(qQ+1)\bigr)=1$, or equivalently $\gcd(q-1,Q+1)=1$. So we know that $f_3(X)$ permutes $\F_{q^3}$ if and only if $\gcd(q-1,Q+1)=1$. By replacing $Q$ with $qQ$ in the above result, we know
\[
\widetilde f_2(X) \colonequals X^{q^2Q+q}-X^{qQ+q}+X^{qQ+1}+X^{q^3Q+q}+X^{qQ+q^2}
\]
permutes $\F_{q^3}$ if and only if $\gcd(q-1,qQ+1)=1$, i.e.\ $\gcd(q-1,Q+1)=1$. It is easy to check that $f_2(X)^q \equiv \widetilde f_2(X) \pmod{X^{q^3}-X}$. It follows that $f_2(X)$ permutes $\F_{q^3}$ if and only if $\gcd(q-1,Q+1)=1$. Similarly, by replacing $Q$ with $qQ$ in the above result, we know that
\[
\widetilde f_4(X) \colonequals -X^{qQ+1}+X^{qQ+q}+X^{qQ+q^2}+X^{q^2Q+1}+X^{q^3Q+1}
\]
permutes $\F_{q^3}$ if and only if $\gcd(q-1,qQ+1)=1$, i.e.\ $\gcd(q-1,Q+1)=1$. It is easy to check that $f_4(X)^q \equiv \widetilde f_4(X) \pmod{X^{q^3}-X}$. It follows that $f_4(X)$ permutes $\F_{q^3}$ if and only if $\gcd(q-1,Q+1)=1$. 
\end{proof}

Next let us prove Theorem~\ref{main3} as follows.

\begin{proof}[Proof of Theorem~\ref{main3}]
By Lemma~\ref{step1} we know that $f(X)$ permutes $\F_{q^3}$ if and only if $(X^q-X)\circ X^{Q+1}$ permutes $\Gamma$ and $(X^{q^2}+X^q+X)\circ cX^R$ is injective on $u+\F_q$ for any $u\in\F_{q^3}$. By Theorem~\ref{key} we know that $(X^q-X)\circ X^{Q+1}$ permutes $\Gamma$ if and only if $\gcd(q-1,Q+1)=2$, which by Lemma~\ref{gcd} says that $p=2$ and $\ord_2(k)\le\ord_2(\ell)$. For any $u\in\F_{q^3}$, since it is additive, $(X^{q^2}+X^q+X)\circ cX^R$ is injective on $u+\F_q$ if and only if $(X^{q^2}+X^q+X)\circ cX^R$ is injective on $\F_q$, which is equivalent to that $(c^{q^2}+c^q+c)X^R$ is injective on $\F_q$, which holds if and only if $c^{q^2}+c^q+c\ne 0$, i.e.\ $\Tr_{\F_{q^3}/\F_q}(c)\ne 0$. Thus $f(X)$ permutes $\F_{q^3}$ if and only if $\gcd(q-1,Q+1=1$ and $\Tr_{\F_{q^3}/\F_q}(c)\ne 0$, or equivalently $p=2$, $\ord_2(k)\le\ord_2(\ell)$, and $c^{q^2}+c^q+c\ne 0$.
\end{proof}

Corollary~\ref{main4} follows directly from Theorem~\ref{main3} by taking $c=1$.

Next let us show Theorem~\ref{key-deg3} as follows.

\begin{proof}[Proof of Theorem~\ref{key-deg3}]
Since $(X^q-X)\circ X^3 = X^3\bigl(X^{3(q-1)}-1\bigr)$, by Lemma~\ref{step2} we know that $(X^q-X)\circ X^3$ permutes $\Gamma$ if and only if $\gcd(3,q-1)=1$, $X^3-1$ has no roots in $\Lambda$, and 
\[
h(X)\colonequals \frac{X^3\bigl((-X^{-1}-1)^3-1\bigr)}{X^3-1}
\]
permutes $\Lambda$. We compute
\[
h(X) = -\frac{2X^3+3X^2+3X+1}{X^3-1} = -\frac{2X+1}{X-1},
\]
so that $h(X)$ has degree $1$ if $q\not\equiv 0\pmod 3$. If $X^3-1$ has no roots in $\Lambda$, then $(X^q-X)\circ X^3 = X^3\bigl(X^{3(q-1)}-1\bigr)$ maps $\Gamma^*$ into $\Gamma^*$, which implies $(X^q-X)\circ X^3$ maps $\Lambda$ into $\Lambda$, so that $h(X)$ maps $\Lambda$ into $\Lambda$.

We claim that $X^3-1$ has no roots in $\Lambda$ if and only if $q\equiv 2\pmod 3$, which concludes the proof since if $q\equiv 2\pmod 3$ then $\deg(h)=1$ and $h(X)$ maps $\Lambda$ into $\Lambda$. If $q\equiv 0\pmod 3$ then $1\in\Lambda$ is a root of $X^3-1$. If $q\equiv 1\pmod 3$ then there is $\omega\in\F_q^*$ with $\omega^3=1$ but $\omega\ne 1$, so that $\omega^2+\omega+1=0$. Since $q\equiv 1\pmod 3$, we have $\omega^{q+1}=\omega^2$, thus $\omega^{q+1}+\omega+1=\omega^2+\omega+1=0$, which says $\omega\in\Lambda$ is a root of $X^3-1$. Henceforth we suppose $q\equiv 2\pmod 3$. If there exists $\lambda\in\Lambda$ such that $\lambda$ is a root of $X^3-1$, then $\lambda^{q+1}+\lambda+1=0$ and $\lambda^3=1$, which since $q\equiv 2\pmod 3$ implies $0=\lambda^{q+1}+\lambda+1=\lambda+2$, i.e.\ $\lambda=-2$. Since $\lambda^3=1$, it follows that $1=\lambda^3=(-2)^3=-8$, which says $q=0$, hence $p=3$, which contradicts with $q\equiv 2\pmod 3$.
\end{proof}

Next let us prove Theorem~\ref{main-deg3} as follows.

\begin{proof}[Proof of Theorem~\ref{main-deg3}]
By Lemma~\ref{step1} we know that $f(X)$ permutes $\F_{q^3}$ if and only if $(X^q-X)\circ X^3$ permutes $\Gamma$ and $(X^{q^2}+X^q+X)\circ cX^Q$ is injective on $u+\F_q$ for any $u\in\F_{q^3}$. By Theorem~\ref{key-deg3} we know that $(X^q-X)\circ X^3$ permutes $\Gamma$ if and only if $q\equiv 2\pmod 3$. For any $u\in\F_{q^3}$, since it is additive, $(X^{q^2}+X^q+X)\circ cX^Q$ is injective on $u+\F_q$ if and only if $(X^{q^2}+X^q+X)\circ cX^Q$ is injective on $\F_q$, which is equivalent to that $(c^{q^2}+c^q+c)X^Q$ is injective on $\F_q$, which holds if and only if $c^{q^2}+c^q+c\ne 0$, i.e.\ $\Tr_{\F_{q^3}/\F_q}(c)\ne 0$. Thus $f(X)$ permutes $\F_{q^3}$ if and only if $q\equiv 2\pmod 3$ and $\Tr_{\F_{q^3}/\F_q}(c)\ne 0$, or equivalently $p=2$, $k$ is odd, and $c^{q^2}+c^q+c\ne 0$.
\end{proof}

Corollary~\ref{main2-deg3} follows directly from Theorem~\ref{main-deg3} by taking $c\in\F_q^*$.




\begin{thebibliography}{9}
\newcommand{\au}[1]{{#1},}
\newcommand{\ti}[1]{\textit{#1},}
\newcommand{\jo}[1]{{#1}}
\newcommand{\vo}[1]{\textbf{#1}}
\newcommand{\yr}[1]{(#1),}
\newcommand{\ppx}[1]{#1,}
\newcommand{\pp}[1]{#1.}
\newcommand{\pps}[1]{#1}
\newcommand{\bk}[1]{{#1},}
\newcommand{\inbk}[1]{in: \bk{#1}}
\newcommand{\xxx}[1]{{arXiv:#1}.}

\bibitem{DXZ}
\au{Z. Ding, W. Xiong, and Q. Zhang}
\ti{Exceptional extensions of local fields and the Carlitz-Wan conjecture}
\jo{Sci. China Math.}
\vo{68}
\yr{2025}
\pp{2815--2826}

\bibitem{DZ-quad}
\au{Z. Ding and M. E. Zieve}
\ti{Determination of a class of permutation quadrinomials}
\jo{Proc. London Math. Soc. (3)}
\vo{127}
\yr{2023}
\pp{221--260}

\bibitem{Gologlu-fractional}
\au{F. G\"olo\u{g}lu}
\ti{Classification of fractional projective permutations over finite fields}
\jo{Finite Fields Appl.}
\vo{81}
\yr{2022}
Paper No. 102027, 50 pp.

\bibitem{GRZ}
\au{R. M. Guralnick, J. Rosenberg and M. E. Zieve}
\ti{A new family of exceptional polynomials in characteristic two}
\jo{Ann. of Math. (2)}
\vo{172}
\yr{2010}
\pp{1361--1390}

\bibitem{GZ}
\au{R. M. Guralnick and M. E. Zieve}
\ti{Polynomials with $\PSL(2)$ monodromy}
\jo{Ann. of Math. (2)}
\vo{172}
\yr{2010}
\pp{1315--1359}

\bibitem{ZZWPL}
\au{T. Zhang, L. Zheng, H. Wang, J. Peng, and Y. Li}
\ti{Further results on permutation pentanomials over $\F_{q^3}$ in characteristic two}
\jo{Finite Fields Appl.}
\vo{110}
\yr{2026}
Paper No. 102743, 26 pp.

\bibitem{Zlem}
\au{M. E. Zieve}
\ti{Some families of permutation polynomials over finite fields}
\jo{Int. J. Number Theory}
\vo{4}
\yr{2008}
\pp{851--857}

\bibitem{Z-Redei}
\au{M. E. Zieve}
\ti{Permutation polynomials on\/ $\F_q$ induced from R\'edei function
bijections on subgroups of\/ $\F_q^*$}
\jo{Monatsh. Math.}, to appear.
arXiv:1310.0776v2, 7 Oct 2013.

\bibitem{Z-exc}
\au{M. E. Zieve}
\ti{Exceptional polynomials}
\inbk{Handbook of Finite Fields}
CRC Press, Boca Raton, FL
\yr{2013}
\pp{229--233}


\end{thebibliography}
\end{document}